\shorttitle}
\@nx\MakeUppercase{\the\toks@}}
\patchcmd\@settitle{\uppercasenonmath\@title}{\Large}{}{}
\authors}
\newtheorem{theorem}{Theorem}[section]
\newtheorem{definition}{Definition}[section]
\newtheorem{corollary}{Corollary}[section]
\newtheorem{proposition}{Proposition}[section]
\newtheorem{lemma}{Lemma}[section]
\newtheorem{remark}{Remark}[section]
\newtheorem{example}{Example}[section]
\numberwithin{equation}{section}
\def\bh{\mathcal{B}(\mathcal{H})}
\def\bp{\mathcal{B}_p(\mathcal{H})}
\newcommand{\bph}{\mathcal{B}_p(\mathcal{H})}
\def\h{\mathcal{H}}
\def\h{\mathcal{H}}
\def\R{\mathbb{R}}
\def\C{\mathbb{C}}
\def\obj{{\perp}^p}
\DeclareMathOperator{\tr}{tr}
\begin{document}
\author[T. Bottazzi and  C. Conde ] {\Large{Tamara Bottazzi}$^{1_{a,b}}$ and  \Large{Cristian Conde}$^{1_b, 2}$}
\address{$^{[1_a]}$ Universidad Nacional de R\'io Negro. Centro Interdisciplinario de Telecomunicaciones, Electrónica, Computación Y Ciencia Aplicada (CITECCA), Sede Andina (8400) S.C. de Bariloche, Argentina.}
\address{$^{[1_b]}$ Consejo Nacional de Investigaciones Cient\'ificas y T\'ecnicas, (1425) Buenos Aires,
		Argentina.}
\email{\url{tbottazzi@unrn.edu.ar}}
\address{$^{[2]}$ Instituto de Ciencias,Universidad Nacional de General Sarmiento, Los Polvorines, Argentina.}
\email{\url{cconde@campus.ungs.edu.ar}}

\keywords{Buzano inequality, Cauchy-Schwarz inequality, Inner product space, Hilbert space, Bounded linear operator.}
\subjclass[2020]{46C05, 26D15, 47B65, 47A12.}
\date{\today}

\title[ Semi-inner product and  angles in Schatten ideals]
{ Semi-inner product and  angles in Schatten ideals}
\maketitle

\begin{abstract}
In this paper, we investigate the Schatten $p$-class ideals $\bp$ for $p > 1$ as semi-inner product spaces in the sense of Giles and Lumer. Within this framework, we explore several geometric and analytic notions such as Birkhoff-James orthogonality, $p$-parallelism, and related properties that naturally arise when these structures are interpreted through the lens of the associated semi-inner product. Furthermore, we introduce a novel notion of angle adapted to this context, which generalizes and unifies existing angle definitions in normed spaces. Our results contribute to a deeper understanding of the geometry of $\bp$ and offer new perspectives on operator behavior in semi-inner product spaces.

\end{abstract}

\section{Introduction and preliminaries}

\textcolor{red}{}

Let $(\mathcal{X}, \|\cdot\|)$ be a normed space.  Within the expansive realm of normed spaces, there exists a distinguished subfamily characterized by intrinsic and geometric properties that are intricately tied to the concept of an inner product. These spaces, commonly known as inner product spaces, not only preserve the fundamental attributes of normed spaces but also provide a rich geometric framework for interpreting essential concepts such as length, angle, and orthogonality among vectors.

The introduction of an inner product in these spaces imparts additional structure to the norm, facilitating the definition of angular and projective relationships between vectors. This enhanced structure is indispensable in various fields, including functional analysis, geometry, and mathematical physics, where the interplay between geometric and algebraic properties of vectors is of paramount importance.

Among inner product spaces, the Hilbert spaces are perhaps the most notable. Their completeness with respect to the norm induced by the inner product makes them the ideal setting for the study of operators and problems in infinite dimensions.

	A semi-inner product space (SIPS) is a generalization of an inner product space, in which the defining properties of the inner product are relaxed to some extent. Semi-inner product spaces maintain many useful characteristics of inner product spaces, but they are more flexible and can apply to a wider variety of settings. Following the work of  Lumer \cite{lumer} and Giles \cite{giles}, we recall that in any normed space $(\mathcal{X}, \|\cdot\|)$ one can construct a semi-inner product. This is a mapping $[\cdot, \cdot]:\mathcal{X}\times \mathcal{X}\rightarrow\mathbb{K}$,  where $\mathbb{K}$ represents either the real field $\mathbb{R}$ or the complex field $\mathbb{C}.$ The semi-inner product satisfies the following for all vectors $x, y, z$ in a vector space $\mathcal{X}$ and for all scalars $\alpha \in \mathbb{K}$:

	\begin{itemize}
		\item[(1)] $[x, x] = \|x\|^2$,
		\item[(2)] $[\alpha x + \beta y, z] = \alpha[x, z] + \beta[y, z]$,
		\item[(3)] $[x, \gamma y] = \overline{\gamma}[x, y]$,
		\item[(4)] $|[x, y]|^2\leq \|x\|^2\|y\|^2$,
	\end{itemize}
	
	The main difference between semi-inner product spaces and inner product spaces lies in the linearity condition. In inner product spaces, the product is linear in both arguments, whereas in semi-inner product spaces, the product is only guaranteed to be linear in one of the arguments.
	
	This relaxation of the linearity condition makes semi-inner product spaces more general. For instance, they can be applied to more abstract spaces, such as certain Banach spaces, where a full inner product might not be available. However, because of this reduced structure, semi-inner product spaces may lack some of the powerful properties of inner product spaces, such as the ability to define a norm directly from the inner product in the form $
	\|x\| = \sqrt{\langle x, x \rangle}.$
SIPS can be found in more general contexts, such as certain optimization problems or non-smooth analysis, where a full inner product structure is too restrictive.

In summary, while inner product spaces offer a highly structured environment with both conjugate symmetry and linearity in both arguments, semi-inner product spaces relax these conditions, allowing for more general applications but at the cost of some of the structure and convenience of inner product spaces.
	
The manuscript is organized as follows. 	In Section~\ref{s2}, we present the definitions, notations, and preliminary concepts required for our development. In Section~\ref{s3}, we explore the ideal $\bp$ as a semi-inner product space for $1 < p < \infty$. Finally, in Section~\ref{s4}, we propose various notions  of angle within this space, aimed at recovering its underlying geometric structure, including Birkhoff–James orthogonality, parallelism, and other related concepts.

	\section{Basic notions and preliminaries}\label{s2}

Let $\mathcal{B}(\mathcal{H})$ denote the $C^*$-algebra of all bounded linear operators on a separable Hilbert space $(\mathcal{H}, \langle \cdot, \cdot \rangle)$. An operator $X \in \mathcal{B}(\mathcal{H})$ is said to be positive if
$
\langle Xz, z \rangle \geq 0
$
for all $z \in \mathcal{H}$. We denote by $\mathcal{B}(\mathcal{H})^+$ the set of all positive operators in  $\mathcal{B}(\mathcal{H})$; that is,
\[
\mathcal{B}(\mathcal{H})^+ = \{ X \in \mathcal{B}(\mathcal{H}) : \langle Xz, z \rangle \geq 0 \text{ for all } z \in \mathcal{H} \}.
\]
Recall that any positive operator $X$ allows the definition of a unique positive square root operator denoted by $X^{\frac 12}$.

 For each $X \in \mathcal{B}(\mathcal{H})$, we define $|X|$ as the square root of $X^*X$. The operator $X$ can then be represented in its polar decomposition as $X = U|X|$, where $U$ is a partial isometry. The polar decomposition satisfies the following equalities
\begin{equation}\label{prop_polar_decomp}
U^*X=|X|,\: U^*U|X|=|X|, \:U^*UX=X, \: X^* = |X|U^*\: \textrm{and}\: |X^*|=U|X|U^*.
\end{equation}

The Schatten $p$-ideals, commonly denoted by $\bp$ for $1 \leq p < \infty$, are specific types of operator ideals associated with bounded linear operators defined on  $\mathcal{H}$. They consist of compact operators whose singular values lie in the $\ell^p$-sequence space, and these ideals are central to operator theory and functional analysis due to their close relationship with $L^p$-spaces and trace-class properties.

More precisely, for an operator $X\in \mathcal{B}(\mathcal{H})$, let $\{s_n(X)\}_{n=1}^\infty$ represent the sequence of singular values of $T$ \textcolor{red}{X}, which are the eigenvalues of the positive operator $|X|$, ordered in descending magnitude  and counting multiplicities. Then, the Schatten $p$-class $\bp$ is defined as follows:

$$
\bp=\left\{ X \in \mathcal{B}(\mathcal{H}) : \| X \|_p = \left( \sum_{n=1}^\infty s_n(X)^p \right)^{\frac{1}{p}} < \infty \right\}.
$$

Here, $\| X\|_p$ denotes the $p$-norm of $X$, known as the Schatten $p$-norm. Each Schatten $p$-class $\bp$ forms a two-sided ideal within $\mathcal{B}(\mathcal{H})$, meaning that if $Y \in \mathcal{B}(\mathcal{H})$ and $X \in \bp$, then both $XY$ and $YX$ are in $\bp$ as well.

Notable special cases include:
 $\mathcal{B}_1(\mathcal{H})$ corresponds to the trace-class operators, where each operator has a well-defined trace, and 
 $\mathcal{B}_2(\h)$  corresponds to the Hilbert-Schmidt operators, with an inner product given by 
 \begin{equation}\label{innerprod in b2}
 \left\langle Y,X\right\rangle_2=\tr(X^*Y),
 \end{equation}
 for any  $X,Y \in \mathcal{B}_2(\h)$.
The Schatten $p$-ideals are equipped with a complete norm, making $\bp$ a Banach space for all $p \geq 1$ and, particularly for $p = 2$, a Hilbert space. These ideals are instrumental in spectral theory, quantum mechanics, and various approximation theories. For the sake of convenience, we denote by $\bp^+ = \bp \cap \mathcal{B}(\mathcal{H})^+$ the set of all positive operators in the ideal $\bp$.

The Schatten $p$-ideals for $1 < p < \infty$ exhibit remarkable geometric properties, being both uniformly convex and smooth. Uniform convexity ensures that sequences converging to the unit sphere also converge in norm, providing stability in functional analysis applications. Smoothness allows for unique supporting hyperplanes at each point on the unit sphere, which aids in establishing well-defined duality mappings. Additionally, the dual spaces of these ideals correspond to the Schatten $q$-ideals, where $q$ satisfies $\frac{1}{p} + \frac{1}{q} = 1$, a relationship that enhances their applicability in operator theory and the study of compact operators.

For an in-depth study of $p$-Schatten class operators, and more generally, ideals within the algebra $\mathcal{B}(\mathcal{H})$, we recommend that the reader consult \cite{G-K}.

Other geometric notions that can be explored in a normed space include parallelism and various types of orthogonality. In recent years, the authors of this manuscript have investigated both concepts and their implications within the context of $p$-Schatten ideals.  Based on the works \cite{Birk}, \cite{James}, and \cite{magajna}, we recall the following definitions for any $X, Y \in \bp$ with $p \geq 1$:  
\begin{enumerate}
	\item Birkhoff--James orthogonality: 
	$$X\obj Y \;\textrm{ if and only if}\;\| X + \gamma Y\|_p \geq \|X\|_p \text{~for all~} \gamma\in \mathbb{C}.$$
	
	\item $p$ Norm-paralellism:
$$X{\parallel}^p Y \;\textrm{ if and only if}\;
	\exists \lambda\in\mathbb{T} \:\textrm{such that}\:
	{\|X + \lambda Y\|}_p = {\|X\|}_p + {\|Y\|}_p,$$
	where $\mathbb{T}=\{\lambda \in \mathbb{C}: |\lambda|=1\}$.
\end{enumerate}

A complete characterization of $p$-norm parallelism was provided in \cite{BCMWZ}. In particular, Theorem 4.3 of \cite{BCMWZ} establishes the following condition:  
\begin{equation}\label{characterization p-parallelism}  
X {\parallel}^p Y \;\textrm{ if and only if }\;  
\|X\|_p |\tr(|X|^{p-1} U^* Y)| = \|Y\|_p \tr(|X|^p),
\end{equation}  
where $X=U|X|, Y\in \bp$.

This result offers a precise criterion for determining when two operators in $\bp$ satisfy $p$-norm parallelism, further enriching the geometric study of Schatten ideals.

Finally, we conclude this section of preliminary notions with an interesting property of $p$-norm parallelism that we will employ throughout the manuscript.

\begin{lemma} Let $X, Y\in \bp$ with $p\geq 1$. Then, 
		$X\parallel^pY$ if and only if $ rX\parallel^p sY$ for all $r, s\geq 0$.
\end{lemma}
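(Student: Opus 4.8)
The plan is to prove the equivalence by handling the degenerate scalar cases separately and then reducing the general positive-scalar case to the unit-modulus parametrization already built into the definition of $\parallel^p$. First I would dispose of the trivial directions: if $r=0$ or $s=0$, then one of $rX$, $sY$ is the zero operator, and since $0\parallel^p Z$ holds for every $Z$ (take $\lambda=1$ and note $\|0+Z\|_p=\|Z\|_p=\|0\|_p+\|Z\|_p$), the statement $rX\parallel^p sY$ is automatic; likewise $X\parallel^p Y$ is automatic whenever $X=0$ or $Y=0$. So the content is entirely in the case $r,s>0$ with $X,Y\neq 0$.

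For the forward direction with $r,s>0$, suppose $X\parallel^p Y$, so there is $\lambda\in\mathbb{T}$ with $\|X+\lambda Y\|_p=\|X\|_p+\|Y\|_p$. I would like to produce $\mu\in\mathbb{T}$ with $\|rX+\mu(sY)\|_p=\|rX\|_p+\|sY\|_p=r\|X\|_p+s\|Y\|_p$. The natural guess is $\mu=\lambda$: then $\|rX+\lambda sY\|_p\le r\|X\|_p+s\|Y\|_p$ always holds by the triangle inequality, so it suffices to prove the reverse inequality. Here I would invoke the characterization \eqref{characterization p-parallelism}: writing $X=U|X|$ with $U$ a partial isometry, $X\parallel^p Y$ is equivalent to $\|X\|_p\,|\tr(|X|^{p-1}U^*Y)|=\|Y\|_p\,\tr(|X|^p)$. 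The key homogeneity observation is that $rX=U(r|X|)$ is a valid polar decomposition (same partial isometry $U$, since $|rX|=r|X|$ for $r\ge 0$), so applying the criterion to the pair $(rX,sY)$ gives the condition $\|rX\|_p\,|\tr(|rX|^{p-1}U^*(sY))|=\|sY\|_p\,\tr(|rX|^p)$, i.e. $r\|X\|_p\cdot s\,r^{p-1}|\tr(|X|^{p-1}U^*Y)|=s\|Y\|_p\cdot r^p\tr(|X|^p)$. Dividing through by $s\,r^p>0$ recovers exactly the original condition for $X\parallel^p Y$. Thus the criterion for $(rX,sY)$ holds if and only if the criterion for $(X,Y)$ holds, which proves both directions simultaneously once the $r,s>0$ reduction is in place.

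The backward direction is then immediate: if $rX\parallel^p sY$ for all $r,s\ge 0$, specialize to $r=s=1$. (Alternatively, for any fixed $r,s>0$ the displayed computation shows the criterion for $(rX,sY)$ is literally the same equation as the criterion for $(X,Y)$, so either one holds iff the other does.) It is worth remarking that one could also argue more elementarily without \eqref{characterization p-parallelism}: the function $t\mapsto \|X+tY\|_p$ is convex, and equality in the triangle inequality $\|X+\lambda Y\|_p=\|X\|_p+\|Y\|_p$ propagates along the ray, but the cleanest route really is to quote the trace characterization and use that $|rX|^{p-1}=r^{p-1}|X|^{p-1}$ and $U$ is unchanged under positive scaling.

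The only real subtlety — hardly an obstacle — is making sure the polar decomposition behaves correctly under scaling by a nonnegative real: for $r\ge 0$ one has $|rX|=\sqrt{(rX)^*(rX)}=\sqrt{r^2 X^*X}=r|X|$, and the same partial isometry $U$ from $X=U|X|$ works for $rX=U(r|X|)=(rX)\cdot\mathbf{1}$... more precisely $rX=(rU)|X|=U(r|X|)$, and since the defining relations \eqref{prop_polar_decomp} are preserved, $U$ is a legitimate partial isometry in the polar decomposition of $rX$ (when $r>0$; when $r=0$ the statement is vacuous as handled above). With that in hand every step is a one-line verification. I expect the write-up to be short: two short paragraphs, one for the $r=0$ or $s=0$ cases and one for the substitution into \eqref{characterization p-parallelism} with the $r^p$ cancellation.
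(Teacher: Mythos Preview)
Your argument is correct. The degenerate cases are handled cleanly, and for $r,s>0$ your use of the trace characterization \eqref{characterization p-parallelism} together with the homogeneity $|rX|^{p-1}=r^{p-1}|X|^{p-1}$ and the persistence of the partial isometry $U$ under positive scaling gives exactly the cancellation you need; the equivalence of the two trace conditions then settles both directions at once.

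That said, your route is genuinely different from the paper's. The paper does not prove the lemma at all: it simply invokes \cite[Lemma~2.1]{AAB}, which is a purely normed-space statement (in any normed space, $\|x+y\|=\|x\|+\|y\|$ implies $\|rx+sy\|=r\|x\|+s\|y\|$ for all $r,s\ge 0$), proved by a convexity/triangle-inequality argument with no reference to Schatten structure. Your aside about ``equality in the triangle inequality propagates along the ray'' is in fact exactly that argument, and it is the shorter and more general one --- it needs neither the polar decomposition nor the characterization \eqref{characterization p-parallelism}, and it works verbatim in any normed space (hence also for $p=1$ without worrying about $|X|^{0}$). Your trace-based computation, by contrast, is specific to $\mathcal{B}_p(\mathcal{H})$ but has the virtue of making the link with \eqref{characterization p-parallelism} completely explicit. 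A minor stylistic point: you set up the direct approach (guessing $\mu=\lambda$ and aiming for the reverse triangle inequality) and then abandon it mid-paragraph for the characterization; in the final write-up it would read more smoothly to commit to one line of argument from the start.
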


We will omit the proof of the previous lemma, as it is an immediate consequence of \cite[Lemma 2.1]{AAB}.

\begin{remark}
	Moslehian and Zamani proved in \cite[Proposition 3.2]{ZM-2015},  that given $X, Y \in \bh$, then $X \parallel Y$ if and only if $r X \parallel s Y$ for all $r, s \in \mathbb{R}-\{0\}$. More precisely,  they showed that 
	$$
	\|X +\lambda Y\|=\|X\|+\|Y\|\:\: \textrm{if and only if } \:\: \|rX +\lambda sY\|=\|rX\|+\|sY\|,
	$$
	for some $\lambda \in \mathbb{T}$ and any $r, s \in \mathbb{R}-\{0\}$.

	  However, as illustrated by the following example, the previous statement is erroneous: let $P$ be a non-zero orthogonal projection. Then,	$
	\|I + P\| = 1 + \|P\| = 2.
	$ However, considering $r = 1$ and $s= -1$, we find that $
	\|I - P\| = 1 \neq 2.
	$
\end{remark}

In \cite{BCMWZ}, we explored the concept of Birkhoff-James orthogonality in the $\bp$ operator ideals. To do so, it was essential to derive an explicit expression for the semi-inner product. More specifically, we focused on providing a detailed formulation of this product to facilitate the study of orthogonality.

\begin{definition}\label{def_sip_bp}
In the ideal of operators $\bp$, for $1 \leq p < \infty$, we define the semi-inner product as follows:
\begin{equation}\label{semi inner prod def}
[Y, X]_p = \|X\|_p^{2-p} \operatorname{tr} \left( |X|^{p-1} U^* Y \right), 
\end{equation}
with  $X, Y \in \bp$ and $X = U|X|$ denotes a polar decomposition of $X$.
\end{definition}

It is worth noting that when $p=2$, the definition given in \eqref{semi inner prod def} coincides with that in \eqref{innerprod in b2}.

As we mentioned previously, Giles in \cite{giles} established that a semi-inner product, sharing several advantageous characteristics of the Hilbert space inner product, can be constructed within a wide range of Banach spaces. In particular, Giles focused on the class of smooth and uniformly convex Banach spaces, proving that these spaces admit a unique semi-inner product with the following properties. More precisely, he proved that a normed linear space $(\mathcal{X},\|\cdot\|)$ is smooth if and only if there exists a \emph{unique} semi-inner product (s.i.p.) that generates the norm $\|\cdot\|$; that is, the corresponding s.i.p.\ satisfies the first requirement in its definition (see \cite[Proposition~4]{Dra2004}). Moreover, this unique semi-inner product $[\,\cdot,\cdot\,]$ is automatically \emph{continuous}. Recall that an s.i.p.\ defined on a normed space $(\mathcal{X},\|\cdot\|)$ is said to be continuous whenever, for every $x,y\in\mathcal{X}$, one has
\begin{equation}\label{eq:continuity}
	\lim_{t\to 0} \operatorname{Re}[\,x,\, y+t x\,]
	\;=\; \operatorname{Re}[\,x,\,y\,].
\end{equation}
We refer the reader to see \cite{Dra2004}  for further details related to s.i.p.

By combining the previous results, we can assert that the semi-inner product $[\,\cdot,\cdot\,]_p$, introduced in Definition~\ref{semi inner prod def}, is the unique admissible form that generates the norm $\|\cdot\|_p$ for $1<p<\infty$, since the corresponding $p$-Schatten ideals are smooth Banach spaces. Furthermore, this unique semi-inner product is automatically continuous in the sense given in \eqref{eq:continuity}. 
 This implies that the definition provided here is independent of the particular polar decomposition employed. However, for the ideal of trace-class operators, this property does not hold, as will be demonstrated in the following example.

\begin{example}
	Let $A = \begin{pmatrix} 1 & 1 & 1 \\ 1 & 1 & 1 \\ 1 & 1 & 1 \end{pmatrix}$, which is a rank-one projector, as it can be written as $A = (1, 1, 1) \otimes (1, 1, 1)$. Clearly, $A \geq 0$. Any permutation of the identity matrix can be used to express the polar decomposition of $A$. For example, we have:
	
	\[
	A = I A = \underbrace{\begin{pmatrix} 0 & 1 & 0 \\ 0 & 0 & 1 \\ 1 & 0 & 0 \end{pmatrix}}_{U_1} \begin{pmatrix} 1 & 1 & 1 \\ 1 & 1 & 1 \\ 1 & 1 & 1 \end{pmatrix} = \underbrace{\begin{pmatrix} 0 & 0 & 1 \\ 1 & 0 & 0 \\ 0 & 1 & 0 \end{pmatrix}}_{U_2} \begin{pmatrix} 1 & 1 & 1 \\ 1 & 1 & 1 \\ 1 & 1 & 1 \end{pmatrix}.
	\]
	
	Note that the semi-inner product for $p = 1$ depends on the choice of partial isometry in the polar decomposition. For instance, if $D$ is any non-zero diagonal matrix, we observe that:
	
	\[
	\text{tr}(U_1^* D) = \text{tr}(U_2^* D) = 0, \quad \text{but} \quad \text{tr}(I D) \neq 0.
	\]
	
	This illustrates the dependence of the semi-inner product on the chosen partial isometry in the case where $p = 1$.
	
\end{example}

Based on the above discussion, we will dedicate the next sections to studying the properties of the semi-inner product (sip) for $1 < p < \infty$.

\section{$\bph$ as semi-inner product space with $1<p<\infty$}\label{s3}

If $\mathcal{X}$ is an inner product space, one of the first fundamental results that follows from its structure is the Cauchy–Schwarz inequality:
\begin{equation}\label{ineq: CS}
|\langle x, y \rangle |^2 \leq \langle x, x \rangle \langle y, y \rangle
\end{equation}
for all $x, y \in \mathcal{X}$.

Over the years, various refinements and generalizations of inequality \eqref{ineq: CS} have been developed by several authors. In what follows, we present a refinement that will be particularly useful for the results discussed in this section.

\begin{lemma}\label{C-S refinement in H}
	Let $\mathcal{X}$ be a  inner product space and $v,w,z\in \mathcal{X}$. If $z$ be such that $\|z\|=1$ and $\left\langle z,w \right\rangle=0 $, then
	$$|\left\langle v,w\right\rangle |^2+|\left\langle v,z\right\rangle |^2\|w\|^2\leq \|v\|^2\|w\|^2.$$
\end{lemma}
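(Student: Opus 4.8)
The plan is to reduce the inequality to Bessel's inequality applied to a suitable orthonormal pair. First I would dispose of the degenerate case: if $w=0$, then both sides vanish and there is nothing to prove, so assume $w\neq 0$ and set $e=w/\|w\|$, a unit vector. By hypothesis $\langle z,w\rangle=0$, hence $\langle z,e\rangle=0$, so $\{z,e\}$ is an orthonormal system in $\mathcal{X}$.

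Next I would invoke Bessel's inequality (or, equivalently, expand $\|v-\langle v,z\rangle z-\langle v,e\rangle e\|^2\geq 0$ and use orthonormality of $\{z,e\}$ to cancel the cross terms) to obtain
\[
|\langle v,z\rangle|^2+|\langle v,e\rangle|^2\;\leq\;\|v\|^2 .
\]
Substituting $e=w/\|w\|$ gives $|\langle v,e\rangle|^2=|\langle v,w\rangle|^2/\|w\|^2$, so the displayed bound reads
\[
|\langle v,z\rangle|^2+\frac{|\langle v,w\rangle|^2}{\|w\|^2}\;\leq\;\|v\|^2 .
\]
Multiplying through by $\|w\|^2>0$ yields exactly
\[
|\langle v,w\rangle|^2+|\langle v,z\rangle|^2\|w\|^2\;\leq\;\|v\|^2\|w\|^2 ,
\]
which is the claimed inequality.

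There is no real obstacle here; the only point requiring a word of care is the case $w=0$ (handled at the outset), and—if one prefers a self-contained argument over citing Bessel's inequality—the verification that the residual vector $v-\langle v,z\rangle z-\langle v,e\rangle e$ is orthogonal to both $z$ and $e$, which is a direct computation using $\langle z,e\rangle=0$ and $\|z\|=\|e\|=1$. I would present the Bessel route as the clean version and mention the direct expansion only as an alternative.
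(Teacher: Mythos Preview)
Your proof is correct and in fact more elementary than the paper's. You reduce the statement to Bessel's inequality for the orthonormal pair $\{z,\,w/\|w\|\}$, which is entirely self-contained. The paper instead invokes a three-vector refinement of Cauchy--Schwarz due to Dragomir and S\'andor,
\[
\bigl(\|v\|^2\|z\|^2-|\langle v,z\rangle|^2\bigr)\bigl(\|w\|^2\|z\|^2-|\langle w,z\rangle|^2\bigr)\ge\bigl(|\langle v,w\rangle|\,\|z\|^2-\langle v,z\rangle\langle z,w\rangle\bigr)^2,
\]
and then specializes to $\|z\|=1$, $\langle z,w\rangle=0$. Your route avoids the external reference and makes the mechanism transparent; the paper's route has the merit of situating the lemma as a particular case of a known Cauchy--Schwarz refinement, which fits the narrative of the section. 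Either argument is perfectly adequate here.
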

\begin{proof}
	In \cite{DraSan}, the authors proved the inequality  
	
	\begin{equation} 
	\left( \|v\|^2 \|z\|^2 - |\langle v, z \rangle|^2 \right)
	\left( \|w\|^2 \|z\|^2 - |\langle w, z \rangle|^2 \right)
	\geq
	\left( |\langle v, w \rangle| \|z\|^2 - \langle v, z \rangle \langle z, w \rangle \right)^2
	\end{equation}  
	
	for all $ v, w, z \in \mathcal{X} $.  
	
	In particular, for $ z \in \mathcal{X} $ with $ \|z\| = 1 $ and $ \langle z, w \rangle = 0 $,  we obtain 
	
	\begin{equation*}
	|\langle v, w \rangle|^2 + |\langle v, z \rangle|^2 \|w\|^2 \leq \|v\|^2 \|w\|^2.
	\end{equation*}

\end{proof}

A technical lemma is presented here, which will be essential throughout the manuscript and crucial for deriving a Cauchy-Schwarz-type inequality for the semi-inner product $[\cdot, \cdot]_p$.

\begin{lemma}\label{lem1}
	Let $X,Y\in \mathcal{B}_p(\mathcal{H})$ with $p>1$, and let $X=U|X|$ and $Y=V|Y|$ be their polar decompositions. Then:
	\begin{enumerate}
		\item $[|Y|,|X|]_p=\|X\|_p^{2-p}\,\tr(|X|^{p-1}|Y|)$ and 
		$[|Y^*|,|X^*|]_p=\|X\|_p^{2-p}\,\tr(|X^*|^{p-1}|Y^*|)$.
		
		\item $|X|^{\frac{p-1}{2}}|Y|^{1/2}\in \mathcal{B}_2(\h)$ and 
		\[
		\left\||X|^{\frac{p-1}{2}}|Y|^{1/2}\right\|_2^2
		= \frac{[|Y|,|X|]_p}{\|X\|_p^{2-p}}.
		\]
		
		\item $|X|^{\frac{p-1}{2}}U^*V|Y|^{1/2}\in \mathcal{B}_2(\mathcal{H})$ and 
		\[
		\left\||X|^{\frac{p-1}{2}}U^*V|Y|^{1/2}\right\|_2^2
		= \frac{[|Y^*|,|X^*|]_p}{\|X^*\|_p^{2-p}}.
		\]
		
		\item $[|Y|,|X|]_p\geq 0$ and $[|Y^*|,|X^*|]_p\geq 0$.
		
		\item 
		\[
		[Y,X]_p
		= \|X\|_p^{2-p}
		\left\langle |X|^{\frac{p-1}{2}}U^*V|Y|^{1/2},\;
		|X|^{\frac{p-1}{2}}|Y|^{1/2}\right\rangle_2.
		\]
	\end{enumerate}
\end{lemma}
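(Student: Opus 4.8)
The plan is to prove the five items essentially in the order listed, since each later item leans on the earlier ones, and to reduce everything to the Hilbert--Schmidt inner product on $\mathcal{B}_2(\h)$ together with the defining formula \eqref{semi inner prod def}. For item (1), I would simply specialize Definition~\ref{def_sip_bp}: taking $Y\mapsto |Y|$ and observing that $|Y|$ is positive with polar decomposition $|Y|=I\cdot|Y|$ (so the partial isometry is the identity on $\overline{\mathrm{ran}}\,|Y|$), the factor $U^*$ in \eqref{semi inner prod def} is replaced by the identity and we get $[|Y|,|X|]_p=\|X\|_p^{2-p}\tr(|X|^{p-1}|Y|)$; the second identity is the same computation applied to $|X^*|=U|X|U^*$ and $|Y^*|=V|Y|V^*$, using $\|X^*\|_p=\|X\|_p$.

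For item (2), the key point is that $|X|^{p-1}$ and $|Y|$ are positive trace-class-type operators for which the products make sense: write $A=|X|^{\frac{p-1}{2}}|Y|^{1/2}$ and compute $\|A\|_2^2=\tr(A^*A)=\tr(|Y|^{1/2}|X|^{p-1}|Y|^{1/2})=\tr(|X|^{p-1}|Y|)$ by cyclicity of the trace; finiteness follows because $|X|^{p-1}\in\mathcal{B}_{p/(p-1)}(\h)=\mathcal{B}_q(\h)$ and $|Y|\in\mathcal{B}_p(\h)$, so their product is trace class by Hölder for Schatten norms. Combining with item (1) gives the stated value $\|A\|_2^2=[|Y|,|X|]_p/\|X\|_p^{2-p}$. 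Item (3) is the analogous computation for $B=|X|^{\frac{p-1}{2}}U^*V|Y|^{1/2}$: here $\|B\|_2^2=\tr(|Y|^{1/2}V^*U|X|^{p-1}U^*V|Y|^{1/2})=\tr\big(|X^*|^{p-1}\,V|Y|V^*\big)$ after using $U|X|^{p-1}U^*=(U|X|U^*)^{p-1}=|X^*|^{p-1}$ (valid since $U^*U$ acts as the identity on $\overline{\mathrm{ran}}\,|X|$) and cyclicity; then recognizing $V|Y|V^*=|Y^*|$ and invoking item (1) for the starred operators yields $\|B\|_2^2=[|Y^*|,|X^*|]_p/\|X^*\|_p^{2-p}$, with finiteness again from Hölder. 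Item (4) is then immediate: both quantities equal $\|X\|_p^{2-p}$ times a squared Hilbert--Schmidt norm, hence nonnegative (recall $\|X\|_p^{2-p}>0$).

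Finally, for item (5), I would start from $[Y,X]_p=\|X\|_p^{2-p}\tr(|X|^{p-1}U^*Y)$, substitute the polar decomposition $Y=V|Y|$, and split $|X|^{p-1}=|X|^{\frac{p-1}{2}}|X|^{\frac{p-1}{2}}$ and $|Y|=|Y|^{1/2}|Y|^{1/2}$, so that $\tr(|X|^{p-1}U^*V|Y|)=\tr\big(|X|^{\frac{p-1}{2}}|X|^{\frac{p-1}{2}}U^*V|Y|^{1/2}|Y|^{1/2}\big)$; a single application of cyclicity rearranges this as $\tr\big((|X|^{\frac{p-1}{2}}|Y|^{1/2})^*(|X|^{\frac{p-1}{2}}U^*V|Y|^{1/2})\big)$ after moving one $|X|^{(p-1)/2}$ around the trace and taking adjoints (using that $|X|^{(p-1)/2}$ is self-adjoint), which is exactly $\langle B, A\rangle_2$ in the notation above. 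This gives the claimed identity. The main obstacle, and the point deserving the most care, is the manipulation in items (3) and (5): one must justify that all the intermediate products lie in $\mathcal{B}_2(\h)$ or $\mathcal{B}_1(\h)$ so that cyclicity of the trace and the adjoint identities are legitimate, and one must handle the partial-isometry identities $U^*U|X|=|X|$ and $U|X|^{p-1}U^*=|X^*|^{p-1}$ correctly --- these follow from \eqref{prop_polar_decomp} together with the fact that functional calculus of $|X|$ commutes with the support projection $U^*U$, but they are the step most prone to a subtle error if done carelessly.
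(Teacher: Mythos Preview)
Your proposal is correct and follows essentially the same route as the paper: both arguments reduce each item to trace computations in $\mathcal{B}_2(\mathcal{H})$ via cyclicity and the identities $U|X|^{p-1}U^*=|X^*|^{p-1}$, $V|Y|V^*=|Y^*|$ from \eqref{prop_polar_decomp}. If anything, you are slightly more explicit than the paper in justifying finiteness (via H\"older for Schatten norms) and in flagging the partial-isometry identities as the delicate step.
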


\begin{proof}
	Item (1) follows directly from the definition of $[\cdot,\cdot]_p$ when applied to positive operators.
	
	To prove (2), note that
	\[
	\left\||X|^{\frac{p-1}{2}}|Y|^{1/2}\right\|_2^2
	= \tr\!\left(|Y|^{1/2}|X|^{p-1}|Y|^{1/2}\right)
	= \tr(|X|^{p-1}|Y|)
	= \frac{[|Y|,|X|]_p}{\|X\|_p^{2-p}} < \infty,
	\]
	so $|X|^{\frac{p-1}{2}}|Y|^{1/2}\in \mathcal{B}_2(\mathcal{H})$.
	
	For (3), using the polar decompositions and the properties in \eqref{prop_polar_decomp}, we obtain
	\begin{align*}
		\left\||X|^{\frac{p-1}{2}}U^*V|Y|^{1/2}\right\|_2^2
		&= \tr\!\left(|Y|^{1/2}V^*U|X|^{p-1}U^*V|Y|^{1/2}\right) \\
		&= \tr(|X^*|^{p-1}|Y^*|)
		= \frac{[|Y^*|,|X^*|]_p}{\|X^*\|_p^{2-p}} < \infty.
	\end{align*}
	Thus $|X|^{\frac{p-1}{2}}U^*V|Y|^{1/2}\in \mathcal{B}_2(\mathcal{H})$.
	
	Item (4) is now immediate, since both quantities in (2) and (3) are squared Hilbert--Schmidt norms and hence nonnegative.
	
	Finally, for (5) note that
	\begin{align*}
		[Y,X]_p
		&= \|X\|_p^{2-p}\,\tr(|X|^{p-1}U^*Y) \\
		&= \|X\|_p^{2-p}\,\tr\!\left(|Y|^{1/2}|X|^{\frac{p-1}{2}}|X|^{\frac{p-1}{2}}U^*V|Y|^{1/2}\right) \\
		&= \|X\|_p^{2-p}
		\left\langle 
		|X|^{\frac{p-1}{2}}U^*V|Y|^{1/2},\;
		|X|^{\frac{p-1}{2}}|Y|^{1/2}
		\right\rangle_2.
	\end{align*}
\end{proof}

%
%

As a consequence of the classical Cauchy–Schwarz inequality in the Hilbert space $\mathcal{B}_2(\mathcal{H})$, we obtain the following inequality. This result serves as an analogue of the Cauchy–Schwarz inequality in the context of semi-inner product spaces $\mathcal{B}_p(\mathcal{H})$, where we make use of the notion of adjoint operators naturally associated with these spaces.

\begin{theorem} \label{theo CS en bp}
	Let $X,Y\in \bp$, with $p>1$, such that $X=U|X|$ and $Y=V|Y|$. Then
\begin{equation} \label{CS en bp}
\left| [Y,X]_p \right|^2 \leq [ |Y^*|,|X^*|]_p [|Y|,|X|]_p.
\end{equation}
\end{theorem}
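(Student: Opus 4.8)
The plan is to reduce the inequality to the ordinary Cauchy--Schwarz inequality in the Hilbert space $\mathcal{B}_2(\mathcal{H})$, using the factorizations already established in Lemma~\ref{lem1}. Set $a = |X|^{\frac{p-1}{2}}|Y|^{1/2}$ and $b = |X|^{\frac{p-1}{2}}U^*V|Y|^{1/2}$. By parts (2) and (3) of Lemma~\ref{lem1}, both $a$ and $b$ lie in $\mathcal{B}_2(\mathcal{H})$, so the inner product $\langle b, a\rangle_2$ is well defined and the Cauchy--Schwarz inequality applies: $|\langle b, a\rangle_2|^2 \le \|b\|_2^2\,\|a\|_2^2$.

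Next I would substitute the identifications of these three quantities in terms of the semi-inner products. Part (5) of Lemma~\ref{lem1} gives $[Y,X]_p = \|X\|_p^{2-p}\,\langle b, a\rangle_2$, hence $|[Y,X]_p|^2 = \|X\|_p^{2(2-p)}\,|\langle b,a\rangle_2|^2$. Part (2) gives $\|a\|_2^2 = [|Y|,|X|]_p / \|X\|_p^{2-p}$, and part (3) gives $\|b\|_2^2 = [|Y^*|,|X^*|]_p / \|X^*\|_p^{2-p}$. Here I would use the elementary fact that $\|X^*\|_p = \|X\|_p$, since $X$ and $X^*$ have the same singular values, so $\|X^*\|_p^{2-p} = \|X\|_p^{2-p}$. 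Plugging in, the Cauchy--Schwarz inequality becomes
\[
\|X\|_p^{-2(2-p)}\,\bigl|[Y,X]_p\bigr|^2
\;\le\;
\frac{[|Y^*|,|X^*|]_p}{\|X\|_p^{2-p}}\cdot\frac{[|Y|,|X|]_p}{\|X\|_p^{2-p}},
\]
and multiplying both sides by $\|X\|_p^{2(2-p)}$ yields exactly \eqref{CS en bp}.

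One should handle the degenerate case $X = 0$ separately (or note that then $[Y,X]_p = 0$ and the right-hand side is $0$ as well, so the inequality holds trivially); otherwise $\|X\|_p > 0$ and all the divisions above are legitimate. I do not anticipate a genuine obstacle here: the substance of the argument is entirely contained in Lemma~\ref{lem1}, and what remains is bookkeeping. The only point that requires a moment of care is making sure the exponents of $\|X\|_p$ match up correctly and invoking $\|X^*\|_p = \|X\|_p$ so that the two denominators coincide; after that the inequality is an immediate transcription of Cauchy--Schwarz in $\mathcal{B}_2(\mathcal{H})$.
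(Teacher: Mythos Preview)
Your proposal is correct and follows essentially the same approach as the paper: both reduce the inequality to the classical Cauchy--Schwarz inequality in $\mathcal{B}_2(\mathcal{H})$ applied to the pair $|X|^{\frac{p-1}{2}}U^*V|Y|^{1/2}$ and $|X|^{\frac{p-1}{2}}|Y|^{1/2}$, and then identify the resulting norms via Lemma~\ref{lem1}. Your write-up is arguably cleaner, since you invoke part~(5) of Lemma~\ref{lem1} directly rather than rederiving it, and you make the use of $\|X^*\|_p=\|X\|_p$ explicit.
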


\begin{proof}
We observe that, by utilizing items (1) and (2) from Lemma \ref{lem1},
\begin{eqnarray*}
	\left|[Y,X]_p \right|^2&=&\|X\|_p^{4-2p}\left|\tr\left( |X|^{p-1}U^*V|Y|\right) \right|^2 \\
	&=&\|X\|_p^{4-2p}\left|\tr\left(\left( |Y|^{1/2} |X|^{\frac{p-1}{2}}\right) \left( |X|^{\frac{p-1}{2}}U^*V|Y|^{1/2}\right) \right) \right|^2 \\
	&=&\|X\|_p^{4-2p}\left|\left\langle |X|^{\frac{p-1}{2}}U^*V|Y|^{1/2}, |X|^{\frac{p-1}{2}}|Y|^{1/2}\right\rangle_2  \right|^2.
\end{eqnarray*}
	
	Since the ideal $\mathcal{B}_2(\h)$ is a Hilbert space, it specifically satisfies the Cauchy-Schwarz inequality. Consequently, we have that
	
	\begin{eqnarray*}
	\left|[Y,X]_p \right|^2&=&\|X\|_p^{4-2p}\left|\left\langle |X|^{\frac{p-1}{2}}U^*V|Y|^{1/2}, |X|^{\frac{p-1}{2}}|Y|^{1/2}\right\rangle_2  \right|^2\\
	&\leq& \|X\|_p^{4-2p}\left\| |X|^{\frac{p-1}{2}}U^*V|Y|^{1/2}\right\| _2^2\left\|  |X|^{\frac{p-1}{2}}|Y|^{1/2}\right\| _2^2\\
	&=&\|X\|_p^{4-2p}\tr\left(U|X|^{p-1}U^*V|Y|V^*\right)\tr\left(|X|^{p-1}|Y| \right).
\end{eqnarray*}

Taking into account the properties of the polar decomposition presented in \eqref{prop_polar_decomp}, it follows that

\begin{eqnarray*}
	\left|[Y,X]_p \right|^2
	&\leq&\|X\|_p^{4-2p}\tr\left(|X^*|^{p-1}|Y^*|\right)\tr\left(|X|^{p-1}|Y| \right)  \\
	&=&\|X\|_p^{4-2p}\dfrac{[|Y^*|,|X^*|]_p}{\|X^*\|_p^{2-p}}\dfrac{[|Y|,|X|]_p}{\|X\|_p^{2-p}}\\
	&=& [ |Y^*|,|X^*|]_p [|Y|,|X|]_p,
\end{eqnarray*}

and this concludes the proof.
\end{proof}

%

In what follows, we present a refinement of inequality \eqref{CS en bp}, derived from Lemma~\ref{C-S refinement in H}.

\begin{theorem} \label{teo refinement CS}
	Let $X,Y\in \bp$, with $p>1$, such that $X=U|X|$ and $Y=V|Y|$. Suppose that there exists $Z\in\mathcal{B}_2(\h)$ such that
	\begin{enumerate}
		\item $\|Z\|_2=1$.
		\item $\left\langle Z,|X|^{\frac{p-1}{2}}|Y|^{1/2} \right\rangle_2=0 $.
	\end{enumerate}
	Then,
	\begin{equation} \label{CS refin en bp}
	\left| [Y,X]_p\right| ^2+\delta_p(Z) \leq [ |Y^*|,|X^*|]_p [|Y|,|X|]_p, 
	\end{equation}
	where $$\delta_p(Z)=\left|\left\langle |X|^{\frac{p-1}{2}}U^*V|Y|^{1/2},Z \right\rangle_2  \right|^2\left\| |X|^{\frac{p-1}{2}}|Y|^{1/2}\right\| _2^2 \|X\|_p^{4-2p} \geq 0. $$
\end{theorem}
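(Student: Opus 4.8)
The plan is to reduce the statement to a single application of Lemma~\ref{C-S refinement in H} inside the Hilbert space $\mathcal{B}_2(\h)$, following the same factorization scheme used in the proof of Theorem~\ref{theo CS en bp}. First I would introduce the three vectors
$$v = |X|^{\frac{p-1}{2}}U^*V|Y|^{1/2}, \qquad w = |X|^{\frac{p-1}{2}}|Y|^{1/2}, \qquad z = Z,$$
recalling from Lemma~\ref{lem1}(2)--(3) that $v,w\in\mathcal{B}_2(\h)$, so that all inner products below are well defined. By hypothesis $\|z\|_2 = \|Z\|_2 = 1$ and $\langle z,w\rangle_2 = \langle Z,\,|X|^{\frac{p-1}{2}}|Y|^{1/2}\rangle_2 = 0$, so $v,w,z$ satisfy the hypotheses of Lemma~\ref{C-S refinement in H}, which yields
$$|\langle v,w\rangle_2|^2 + |\langle v,z\rangle_2|^2\,\|w\|_2^2 \;\leq\; \|v\|_2^2\,\|w\|_2^2.$$

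Next I would multiply this inequality through by $\|X\|_p^{4-2p}>0$ and identify each term. By Lemma~\ref{lem1}(5), as already computed in the proof of Theorem~\ref{theo CS en bp}, one has $\|X\|_p^{4-2p}|\langle v,w\rangle_2|^2 = |[Y,X]_p|^2$. The middle term is, by definition, exactly $\delta_p(Z) = \|X\|_p^{4-2p}\,|\langle |X|^{\frac{p-1}{2}}U^*V|Y|^{1/2},Z\rangle_2|^2\,\||X|^{\frac{p-1}{2}}|Y|^{1/2}\|_2^2$, which is manifestly nonnegative. For the right-hand side, Lemma~\ref{lem1}(2)--(3) give $\|v\|_2^2 = [|Y^*|,|X^*|]_p/\|X^*\|_p^{2-p}$ and $\|w\|_2^2 = [|Y|,|X|]_p/\|X\|_p^{2-p}$; since the singular values of $X$ and $X^*$ coincide, $\|X^*\|_p = \|X\|_p$, and hence $\|X\|_p^{4-2p}\|v\|_2^2\|w\|_2^2 = [|Y^*|,|X^*|]_p\,[|Y|,|X|]_p$. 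Substituting these three identities gives \eqref{CS refin en bp}.

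There is no serious obstacle here: the whole content lies in Lemma~\ref{C-S refinement in H} together with the Hilbert--Schmidt factorization recorded in Lemma~\ref{lem1}, and the argument is a transcription of the proof of Theorem~\ref{theo CS en bp} with the sharper scalar inequality inserted at the key step. The only points that require a moment's care are checking that $Z$ genuinely plays the role of the vector $z$ in Lemma~\ref{C-S refinement in H} — in particular that the orthogonality hypothesis is imposed against $w=|X|^{\frac{p-1}{2}}|Y|^{1/2}$, the second entry of the inner product representing $[Y,X]_p$ — and the bookkeeping of the powers of $\|X\|_p$, which closes precisely because $\|X^*\|_p=\|X\|_p$.
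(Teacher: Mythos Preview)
Your proof is correct and follows essentially the same route as the paper's: both arguments set $v=|X|^{\frac{p-1}{2}}U^*V|Y|^{1/2}$, $w=|X|^{\frac{p-1}{2}}|Y|^{1/2}$, $z=Z$ in $\mathcal{B}_2(\h)$, apply Lemma~\ref{C-S refinement in H}, and then identify the resulting terms via Lemma~\ref{lem1}. The only cosmetic difference is that the paper first disposes of the trivial case $X=0$ or $Y=0$ before invoking the factor $\|X\|_p^{4-2p}$.
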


\begin{proof}
	
	First, note that if either $X$ or $Y$ equals $0$, inequality \eqref{CS refin en bp} is trivially satisfied. Suppose now that both $X$ and $Y$ are non-zero. In this case, we observe that $\delta_p$ is well-defined by item (2) of Lemma \ref{lem1}.
	Then, we have 
	\begin{eqnarray*}
		\left|[Y,X]_p \right|^2+\delta_p(Z)	&=&\|X\|_p^{4-2p}\left[ \left|\left\langle |X|^{\frac{p-1}{2}}U^*V|Y|^{1/2}, |X|^{\frac{p-1}{2}}|Y|^{1/2}\right\rangle_2  \right|^2\right. \\
		& &\left. +\left|\left\langle |X|^{\frac{p-1}{2}}U^*V|Y|^{1/2},Z \right\rangle_2  \right|^2\left\| |X|^{\frac{p-1}{2}}|Y|^{1/2}\right\| _2^2\right]. 
			\end{eqnarray*}
	Considering  $v = |X|^{\frac{p-1}{2}} U^* V |Y|^{\frac{1}{2}}$, $w = |X|^{\frac{p-1}{2}} |Y|^{\frac{1}{2}}$, and  $z = Z$ in Lemma \ref{C-S refinement in H}, we obtain from hypothesis (2) and Lemma \ref{lem1} that
		
		\begin{eqnarray*}
	\left|[Y,X]_p \right|^2+\delta_p(Z)	&\leq&\|X\|_p^{4-2p}\left\| |X|^{\frac{p-1}{2}}U^*V|Y|^{1/2}\right\| _2^2\left\| |X|^{\frac{p-1}{2}}|Y|^{1/2}\right\| _2^2\\
		&=&\|X\|_p^{4-2p}\tr\left(|X^*|^{p-1}|Y^*|\right)\tr\left(|X|^{p-1}|Y| \right)  \\
		&=&\|X\|_p^{4-2p}\dfrac{[|Y^*|,|X^*|]_p}{\|X^*\|_p^{2-p}}\dfrac{[|Y|,|X|]_p}{\|X\|_p^{2-p}}\\
		&=& [ |Y^*|,|X^*|]_p [|Y|,|X|]_p.
	\end{eqnarray*}
\end{proof}

The following is a simple and well-known result that will be useful for establishing conditions for the existence of an operator $Z \in \mathcal{B}_2(\mathcal{H})$ satisfying Theorem~\ref{teo refinement CS}.

\begin{lemma} \label{lem trAB}
	For every $A,B\in \mathcal{B}(\h)^+$, the following conditions are equivalent:
\begin{enumerate}
	\item 	$\tr(AB)=0$.
	\item  $\|B^{1/2}A^{1/2}\|_2=0$.
	\item  $B^{1/2}A^{1/2}=0$.
	\item $BA=0$ ($AB=0$).
\end{enumerate}	
\end{lemma}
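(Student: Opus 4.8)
The plan is to prove the chain of equivalences $(1)\Rightarrow(2)\Rightarrow(3)\Rightarrow(4)\Rightarrow(1)$, exploiting the fact that $A,B\ge 0$ admit unique positive square roots $A^{1/2},B^{1/2}$, which are themselves bounded and positive.

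\medskip

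First I would observe that $(1)\Leftrightarrow(2)$ is essentially a tautology once one recalls the elementary identity
\[
\|B^{1/2}A^{1/2}\|_2^2 = \tr\!\left((B^{1/2}A^{1/2})^*(B^{1/2}A^{1/2})\right)
= \tr\!\left(A^{1/2}B A^{1/2}\right) = \tr(AB),
\]
where the last equality uses the cyclicity of the trace (valid here since $A^{1/2}BA^{1/2}$ is a positive operator and the products involved are trace-class whenever $\tr(AB)$ is finite, which is the hypothesis implicit in writing $\tr(AB)=0$). Hence $\tr(AB)=0$ if and only if $\|B^{1/2}A^{1/2}\|_2=0$. Next, $(2)\Leftrightarrow(3)$ is immediate because $\|\cdot\|_2$ is a genuine norm on $\mathcal{B}_2(\mathcal{H})$, so its vanishing forces the operator to be $0$; and conversely the zero operator has zero norm.

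\medskip

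The only step with any content is $(3)\Leftrightarrow(4)$. For $(3)\Rightarrow(4)$: from $B^{1/2}A^{1/2}=0$ I would multiply on the left by $B^{1/2}$ and on the right by $A^{1/2}$ to get $B A = B^{1/2}(B^{1/2}A^{1/2})A^{1/2}=0$. Taking adjoints gives $AB=0$ as well, so the parenthetical claim in (4) is automatic. For $(4)\Rightarrow(3)$: if $BA=0$, then $0 = \tr(BA)=\tr(AB)=\tr(A^{1/2}BA^{1/2}) = \|B^{1/2}A^{1/2}\|_2^2$, whence $B^{1/2}A^{1/2}=0$ by the identity above; alternatively and more directly, $BA=0$ gives $(B^{1/2}A^{1/2})^*(B^{1/2}A^{1/2}) = A^{1/2}BA^{1/2}$, and for any $z\in\mathcal{H}$, $\langle A^{1/2}BA^{1/2}z,z\rangle = \langle BA^{1/2}z, A^{1/2}z\rangle$; to conclude this is $0$ one notes $\mathrm{ran}(A^{1/2})\subseteq \mathrm{ran}(A)^{-}$ ... this is the subtle point, so I would instead just use the trace identity, which sidesteps range considerations entirely. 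So the cleanest route is: $(1)\Leftrightarrow(2)\Leftrightarrow(3)$ via the square-root/trace identity, then $(3)\Rightarrow(4)$ by the two-sided multiplication trick, and $(4)\Rightarrow(1)$ trivially since $\tr(BA)=\tr(AB)$.

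\medskip

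The main obstacle is really just bookkeeping about which operators are trace-class and whether cyclicity of the trace is legitimate: the statement is phrased for arbitrary $A,B\in\mathcal{B}(\mathcal{H})^+$, so one should either tacitly assume $\tr(AB)$ makes sense (i.e. $A^{1/2}BA^{1/2}\in\mathcal{B}_1(\mathcal{H})$), or note that if $\tr(AB)=+\infty$ the equivalences still hold vacuously in the appropriate sense. I would add a one-line remark that all trace manipulations are justified because $A^{1/2}BA^{1/2}\ge 0$ and $\tr(CD)=\tr(DC)$ holds for $C\in\mathcal{B}(\mathcal{H})$, $D\in\mathcal{B}_1(\mathcal{H})$, which covers every use above. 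No deep input is needed beyond the existence and positivity of square roots and the basic properties of the Hilbert--Schmidt and trace norms already recalled in Section~\ref{s2}.
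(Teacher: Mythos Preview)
Your proof is correct and hinges on the same identity the paper uses, namely $\tr(AB)=\tr\bigl((B^{1/2}A^{1/2})^*(B^{1/2}A^{1/2})\bigr)=\|B^{1/2}A^{1/2}\|_2^2$. The paper's own proof is a one-liner that displays this identity and declares all implications to follow from it; you simply spell out the steps $(3)\Rightarrow(4)$ and $(4)\Rightarrow(1)$ explicitly and add a cautionary remark about trace-class assumptions, but the underlying argument is identical.
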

\begin{proof}
All the implicances can be deduced by observing that
$$0=\tr(AB)=\tr(A^{1/2}B^{1/2}B^{1/2}A^{1/2})=\tr((B^{1/2}A^{1/2})^*B^{1/2}A^{1/2})=\|B^{1/2}A^{1/2}\|_2^2.$$
\end{proof}

\begin{remark}
	Let $X,Y\in \bp$, with $p>1$, such that $X=U|X|$ and $Y=V|Y|$. 	If $|X|^{p-1}|Y|=0$, then $$\delta_p(Z)=[Y,X]_p=0$$
	for every $Z\in \mathcal{B}_2(\h)$.
	
Indeed, by Lemma \ref{lem trAB},  $|X|^{p-1}|Y|=0$ is equivalent to $|X|^{\frac{p-1}{2}}|Y|^{1/2}=0 $, since $|X|^{p-1}$ and $|Y|$ are positive operators.  Then, $\delta_p(Z)=0$ for every $Z\in \mathcal{B}_2(\h)$ and 
$[Y,X]_p=\|X\|_p^{2-p}\tr(|Y||X|^{p-1}U^*V)=0$.
\end{remark}

For the previous remark, we assume that $|X|^{\frac{p-1}{2}}|Y|^{1/2}\neq 0$.

\begin{remark}
	
Since $B_2(\mathcal{H})$ is a Hilbert space, as a consequence of the notion of orthogonal complement in such a space, we can prove that given $X, Y \in B_p(\mathcal{H})$, there exists $Z \in B_2(\mathcal{H})$ that satisfies the hypotheses of Theorem \ref{teo refinement CS}. Specifically, recall that if $\mathcal{W} \subset B_2(\mathcal{H})$, then

\[
\mathcal{W}^\perp = \{ Z \in B_2(\mathcal{H}) : \langle Z, W \rangle_2 = 0, \forall\: W \in \mathcal{W} \}.
\]

Now, considering $\mathcal{W} = \left\{ |X|^{\frac{p-1}{2}} |Y|^{\frac{1}{2}} \right\}$, we know that $\mathcal{W}^\perp$ is non-empty and non-trivial. Therefore, by taking $Z_0 \in \mathcal{W}^\perp$ with $Z_0 \neq 0$, it follows that $Z = \frac{Z_0}{\|Z_0\|}$ satisfies the desired conditions

Observe that \eqref{CS refin en bp} refines \eqref{CS en bp} if there exists $Z \in \mathcal{B}_2(\h)$, $Z \neq 0$, orthogonal to $|X|^{\frac{p-1}{2}} |Y|^{\frac{1}{2}}$ and satisfying $\delta_p(Z) > 0$. This means that such a $Z$ must be orthogonal to $|X|^{\frac{p-1}{2}} |Y|^{\frac{1}{2}}$ but not to $|X|^{\frac{p-1}{2}} U^* V |Y|^{\frac{1}{2}}$ with respect to the Hilbert-Schmidt inner product. In conclusion, we obtain a genuine refinement of \eqref{CS refin en bp} if and only if
\begin{equation}\label{intersection}
\left\{ |X|^{\frac{p-1}{2}} |Y|^{\frac{1}{2}} \right\}^{\perp} \setminus \left\{ |X|^{\frac{p-1}{2}} U^* V |Y|^{\frac{1}{2}} \right\}^{\perp} \neq \emptyset.
\end{equation}

\end{remark}

\begin{example}
	
	\begin{enumerate}
%
%
%
%
%
%
%

	\item
	Let us consider matrices $X, Y \in \mathbb{R}^{2 \times 2}$ defined as
	\[
	X = \begin{pmatrix} 2 & 0 \\ 0 & 1 \end{pmatrix} \quad \textrm{and}\quad  Y = \begin{pmatrix} 0 & 1 \\ 1 & 0 \end{pmatrix}.
	\]
	
	Since $X$ is positive, its polar decomposition is
	\(
	X = U |X|,\) with  $U= I$ and 
	$|X| = X$.
	
	For $Y$, note that $Y$ is unitary, so its polar decomposition is given by
	\(
	Y = V |Y|\),  with  $V = Y$  and  $|Y| = I.$
	Thus, we have
	\[
	U^* V = V = Y = \begin{pmatrix} 0 & 1 \\ 1 & 0 \end{pmatrix} \neq I.
	\]
	
	For any given $p>1$, define
	\[
	A = |X|^{\frac{p-1}{2}} |Y|^{\frac{1}{2}} \quad \text{and} \quad B = |X|^{\frac{p-1}{2}} U^* V|Y|^{\frac{1}{2}}.
	\]
	Since $|Y| = I$, we obtain
	\[
	A = |X|^{\frac{p-1}{2}} = \begin{pmatrix} 2^{\frac{p-1}{2}} & 0 \\ 0 & 1 \end{pmatrix},
	\]
	and
	\[
	B = |X|^{\frac{p-1}{2}} U^* V|Y|^{\frac{1}{2}}. = \begin{pmatrix} 2^{\frac{p-1}{2}} & 0 \\ 0 & 1\end{pmatrix} \begin{pmatrix} 0 & 1 \\ 1 & 0 \end{pmatrix} = \begin{pmatrix} 0 & 2^{\frac{p-1}{2}} \\ 1 & 0 \end{pmatrix}.
	\]

	To find the orthogonal complement of $A$ with respect to the Hilbert-Schmidt inner product, we consider:
	\[
	\{A\}^\perp = \left\{ Z \in \mathbb{R}^{2 \times 2} \mid \tr(Z^* A) = 0 \right\}.
	\]

	For any $Z = \begin{pmatrix} z_{11} & z_{12} \\ z_{21} & z_{22} \end{pmatrix} $, we compute:
	\[
	Z^* A = \begin{pmatrix} z_{11} & z_{21} \\ z_{12} & z_{22} \end{pmatrix}\begin{pmatrix} 2^{\frac{p-1}{2}} & 0 \\ 0 & 1 \end{pmatrix}
	= \begin{pmatrix} z_{11} 2^{\frac{p-1}{2}} & z_{21}  \\ z_{12} 2^{\frac{p-1}{2}} & z_{22} \end{pmatrix}.
	\]
	Taking the trace, we obtain 
	\[
	\tr(Z^* A) = z_{11} 2^{\frac{p-1}{2}} + z_{22}  = 0.
	\]
	Thus, the orthogonal complement of $A$ is:
	\[
	\{A\}^\perp = \left\{ Z = \begin{pmatrix} z_{11} & z_{12} \\ z_{21} & z_{22} \end{pmatrix} :  z_{11} 2^{\frac{p-1}{2}} + z_{22} = 0\right\}.
	\]
	
Similarly, for $\{B\}^\perp$, we compute the following. Thus, the orthogonal complement of $B$ is given by:
	\[
	\{B\}^\perp = \left\{ Z = \begin{pmatrix} z_{11} & z_{12} \\ z_{21} & z_{22} \end{pmatrix} :  z_{21} +  z_{12}  2^{\frac{p-1}{2}} = 0 \right\}.
	\]
	
	Now, we verify that the difference $\{A\}^\perp \setminus \{B\}^\perp$ is nonempty. Consider the matrix:
	\[
	Z_0 = \begin{pmatrix} -1 & 1 \\ 1 & 2^{\frac{p-1}{2}} \end{pmatrix}.
	\]
	This satisfies
	\(
	(-1) 2^{\frac{p-1}{2}} + 2^{\frac{p-1}{2}}= 0,
	\)
	so $Z_0 \in \{A\}^\perp$. However, 
	\(
	  1+    2^{\frac{p-1}{2}} \neq 0,
	\)
	 meaning $Z_0 \notin \{B\}^\perp$. Hence,
	\[
	\left\{ |X|^{\frac{p-1}{2}}|Y|^{\frac{1}{2}} \right\}^{\perp}\setminus \left\{ |X|^{\frac{p-1}{2}}U^* V|Y|^{\frac{1}{2}} \right\}^{\perp} \neq \emptyset,
	\]
	
and
\begin{eqnarray*}
	\delta_p(Z_0)&=&\left|\left\langle B,Z_0 \right\rangle_2  \right|^2\left\| A\right\| _2^2 \|X\|_p^{4-2p}\\
	&=&(2^{\frac{p-1}{2}}+1)^2\tr\begin{pmatrix}
		2^{p-1}&0\\
		0&1
	\end{pmatrix}(2^p+1)^{\frac{4-2p}{p}}\\
&=&(2^{\frac{p-1}{2}}+1)^2(2^{p-1}+1)(2^p+1)^{\frac{4-2p}{p}}>0
\end{eqnarray*}
	
\item

Given a fixed value of $p > 1$, we seek to construct two operators $X, Y \in \bp $   satisfy the geometric condition:
\[
\left\{ |X|^{\frac{p-1}{2}}|Y|^{\frac{1}{2}} \right\}^{\perp}\setminus \left\{ |X|^{\frac{p-1}{2}}U^* V|Y|^{\frac{1}{2}} \right\}^{\perp} \neq \emptyset.
\]

Let $\{e_n\}_{n\geq1}$ be an orthonormal basis  of a Hilbert space $\h$. We consider  $X$to be diagonal in the following way:
\[
X e_n = \frac{(-1)^n}{n^\alpha} e_n=\lambda_n e_n,
\]
with $\alpha > \frac{1}{p}.$ To show that the operator $X$ belongs to the $p$ Schatten -class $\mathcal{B}_p(\h)$, we need to compute its $p$-norm, which is given by
\[
\|X\|_p = \left( \sum_{n=1}^{\infty} |\lambda_n|^p \right)^{1/p}=  \left( \sum_{n=1}^{\infty} \left| \frac{(-1)^n}{n^\alpha} \right|^p \right)^{1/p} = \left( \sum_{n=1}^{\infty} \frac{1}{n^{\alpha p}} \right)^{1/p}<\infty, 
\] 
because  $\alpha p >1.$

Since $X$ is diagonal, its polar decomposition is immediate:
\[
X = U |X|, \quad \text{with } |X| e_n = |\lambda_n| e_n \text{ and } U e_n = \operatorname{sgn}(\lambda_n) e_n.
\]
We note that  $U$ is a unitary operator acting as a reflection in the canonical basis.

Now we define $Y$ in the following way
\[
Y e_n = \mu_n e_{n+1},
\]
where the values $\mu_n$ are chosen as
\(
\mu_n = \frac{1}{(n+1)^\alpha}\) with  $\alpha > \frac{1}{p}.$

Again, we verify that $Y \in \bp$:
\[
\|Y\|_p^p=\sum_{n=1}^{\infty} |\mu_n|^p = \sum_{n=1}^{\infty} \frac{1}{(n+1)^{\alpha p}} < \infty.
\]

The polar decomposition of $Y$ is given by \(
Y=V|Y|, 
\) where  $Ve_n=e_{n+1}$ and 
$ |Y| e_n = \mu_n e_n.$

We aim to verify that the set
\[\left\{ |X|^{\frac{p-1}{2}}|Y|^{\frac{1}{2}} \right\}^{\perp} \setminus \left\{ |X|^{\frac{p-1}{2}}U^* V|Y|^{\frac{1}{2}} \right\}^{\perp} \subseteq \mathcal{B}_2(\mathcal{H}),
\]
is non-empty. To demonstrate this, we provide an explicit element in the set difference.

Let $Z$ be a shift operator defined by:
	
	\[
	Z e_n =  \frac{(-1)^{n+1}}{n^\gamma}e_{n+1}=\nu_n e_{n+1},
	\]
	where $\gamma > \frac{1}{2}.$

	Thus, the Hilbert-Schmidt norm of $Z$ is
	\[
	\|Z\|_{2}^2 = \sum_{n=1}^{\infty} \|Z e_n\|^2=\sum_{n=1}^{\infty} \left| \frac{(-1)^{n+1}}{n^\gamma} \right|^2=\sum_{n=1}^{\infty} \frac{1}{n^{2\gamma}}<\infty, 
	\]
	
	since $
	\gamma > \frac{1}{2}.$

	Now, we need to verify that $Z$ belongs to the orthogonal complement $\left\{ |X|^{\frac{p-1}{2}} |Y|^{\frac{1}{2}} \right\}^{\perp}$. For this, we have 
	
	\begin{eqnarray}
	\langle Z, |X|^{\frac{p-1}{2}} |Y|^{\frac{1}{2}}\rangle_2&=&\sum_{n=1} ^{\infty}\langle Ze_n, |X|^{\frac{p-1}{2}} |Y|^{\frac{1}{2}} e_n\rangle\nonumber \\
	&=&\sum_{n=1} ^{\infty}\left\langle  \frac{(-1)^n}{n^\gamma}e_{n+1}, \frac{1}{n^{\alpha \cdot \frac{p-1}{2}} (n+1)^{\frac{\alpha}{2}}} e_n\right\rangle=0. \nonumber \
	\end{eqnarray}

	Next, we verify that $Z$ does not belong to the orthogonal complement $\left\{ |X|^{\frac{p-1}{2}} U^* V |Y|^{\frac{1}{2}} \right\}^{\perp}$. 	We know that:

	\begin{eqnarray*}
\langle Z, |X|^{\frac{p-1}{2}} U^* V |Y|^{\frac{1}{2}}\rangle_2&=&
	\sum_{n=1}^{\infty}\langle Z e_n, |X|^{\frac{p-1}{2}} U^* V |Y|^{\frac{1}{2}} e_n \rangle  \nonumber \\ &=&
		\sum_{n=1}^{\infty} \frac{(-1)^{2n+1}}{n^\gamma} \frac{1}{n^{\alpha \frac{p-1}{2}}} \frac{1}{(n+1)^{\alpha \frac{1}{2}}} <0.
	\end{eqnarray*}	
	This demonstrates that $Z$ belongs to the difference of the orthogonal complements sought.

\end{enumerate}
	
\end{example}

\section{Angles  in $\bp$}\label{s4}

\subsection{A First Approach to the Notion of Angle}

A fundamental aspect of inner product spaces is the ability to define an angle between vectors, which provides deep geometric insight and facilitates the study of orthogonality, projections, and operator theory. This notion is inherently tied to the inner product, which allows for a rigorous and consistent measurement of the relative orientation between elements. The concept of angle has proven to be a powerful tool in various mathematical frameworks, particularly in functional analysis and spectral theory.

In this section we develop a parallel theory of angles in semi‐inner product spaces.  Because a semi‐inner product need not satisfy conjugate symmetry, the classical construction of angles must be carefully revised.  Our goal is to understand how this more flexible structure alters fundamental geometric properties and to extend key results from inner product spaces to this broader context.  We begin by introducing the following definitions, in direct analogy with the inner product setting.

\begin{definition} \label{semiangles}
Let $p> 1$ and $X,Y\in \bp$. We define the following quantities
\begin{equation}\label{def alpha}
\alpha_{Y,X}=\dfrac{[Y,X]_p}{\|X\|_p\|Y\|_p}
\end{equation}
and 
\begin{equation}\label{def beta}
	\beta_{Y,X}={\rm Re}(\alpha_{Y,X}).
\end{equation}

\end{definition}

We now collect the fundamental properties of the scalar $\alpha_{Y,X}$, which encodes the semi‐inner product relationship between $X$ and $Y$ in $\mathcal{B}_p(\mathcal{H})$. The following proposition lists its "homogeneity" under scalar multiplication, normalization, boundedness, and its characterization of Birkhoff–James orthogonality.

\begin{proposition} \label{properties of alpha}
	
Let $p> 1$ and $X,Y\in \bp$. Then,
\begin{enumerate}
	\item \label{prop escalar} $\alpha_{aY,bX}=\frac{\overline{b}a}{|a||b|}\  \alpha_{Y,X}$, for every $a,b\in \C-\{0\}$.
	\item $a\alpha_{Y,X}=|a|\alpha_{aY,X}$ for every $a\in \C-\{0\}$.
	\item $\alpha_{X,X}=1$, for all $X\neq 0$.
	\item $|\alpha_{Y,X}|\leq 1$.
	\item \label{prop ortogonal} $\alpha_{Y,X}=0$ if and only if $X\obj Y$.
\end{enumerate}

\end{proposition}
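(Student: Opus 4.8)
The plan is to verify each of the five items directly from Definition~\ref{def_sip_bp} and the axioms of the semi-inner product, using the results already established in this section. I would organize the proof by first handling the scalar-multiplication identities, then normalization and boundedness, and finally the orthogonality characterization.

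For item (1), I would unwind $\alpha_{aY,bX}$ using the definition: $\alpha_{aY,bX}=\dfrac{[aY,bX]_p}{\|bX\|_p\|aY\|_p}$. By property (2) of the semi-inner product, $[aY,bX]_p=a[Y,bX]_p$, and by property (3), $[Y,bX]_p=\overline{b}[Y,X]_p$; hence $[aY,bX]_p=\overline{b}a[Y,X]_p$. Since $\|bX\|_p=|b|\,\|X\|_p$ and $\|aY\|_p=|a|\,\|Y\|_p$, dividing gives $\alpha_{aY,bX}=\dfrac{\overline{b}a}{|a||b|}\alpha_{Y,X}$. Item (2) is then the special case $b=1$ rearranged: from (1), $\alpha_{aY,X}=\dfrac{a}{|a|}\alpha_{Y,X}$, so $|a|\alpha_{aY,X}=a\,\alpha_{Y,X}$. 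Item (3) follows immediately from property (1) of the semi-inner product: $[X,X]_p=\|X\|_p^2$, so $\alpha_{X,X}=\dfrac{\|X\|_p^2}{\|X\|_p\|X\|_p}=1$ whenever $X\neq 0$. Item (4) is a restatement of property (4) of the semi-inner product (the Cauchy--Schwarz inequality $|[Y,X]_p|^2\leq\|X\|_p^2\|Y\|_p^2$), which gives $|[Y,X]_p|\leq\|X\|_p\|Y\|_p$ and hence $|\alpha_{Y,X}|\leq 1$; alternatively one can invoke Theorem~\ref{theo CS en bp}.

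For item (5), the content is the equivalence $[Y,X]_p=0 \iff X\obj Y$. One direction uses the differentiability of $t\mapsto\|X+tY\|_p^2$ at $t=0$: since $\bp$ is smooth for $1<p<\infty$, the norm is Gâteaux differentiable, and the standard Giles--Lumer identification gives $\left.\tfrac{d}{dt}\right|_{t=0}\|X+\gamma tY\|_p^2 = 2\,\mathrm{Re}(\overline{\gamma}[Y,X]_p)$ (or directly that $\mathrm{Re}[Y,X]_p$ governs the one-sided derivatives). If $[Y,X]_p=0$, then this derivative vanishes for all $\gamma\in\C$, and convexity of $\gamma\mapsto\|X+\gamma Y\|_p$ forces $\|X+\gamma Y\|_p\geq\|X\|_p$ for all $\gamma$, i.e. $X\obj Y$. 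Conversely, if $X\obj Y$, then $\gamma=0$ minimizes $\|X+\gamma Y\|_p$, so the derivative at $\gamma=0$ vanishes in every direction, giving $\mathrm{Re}(\overline{\gamma}[Y,X]_p)=0$ for all $\gamma\in\C$; taking $\gamma=1$ and $\gamma=i$ yields $[Y,X]_p=0$. Then $\alpha_{Y,X}=0$ iff $[Y,X]_p=0$ iff $X\obj Y$, provided $X,Y\neq 0$ (the cases $X=0$ or $Y=0$ being degenerate in the definition of $\alpha$ and handled separately or excluded).

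The main obstacle is item (5): items (1)--(4) are essentially formal consequences of the axioms, but the orthogonality characterization requires the link between Birkhoff--James orthogonality and the semi-inner product, which rests on the smoothness of $\bp$ and the fact that the semi-inner product in Definition~\ref{def_sip_bp} is precisely the one generating the norm (as discussed in Section~\ref{s2}, citing \cite{giles}, \cite{Dra2004}). I would state this connection carefully, citing the relevant earlier discussion, rather than re-deriving the norm derivative formula from scratch; the only real care needed is tracking the real/imaginary parts to conclude $[Y,X]_p=0$ from $\mathrm{Re}(\overline\gamma[Y,X]_p)=0$ for all $\gamma$.
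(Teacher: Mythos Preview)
Your treatment of items (1)--(4) matches the paper's: both dispatch them as immediate consequences of the definition of $\alpha_{Y,X}$ and the semi-inner product axioms, with your write-up simply being more explicit.

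For item (5) the approaches diverge. The paper does not argue from scratch: it invokes the equivalence $X\obj Y \iff [Y,X]_p=0$ as an established fact, citing Theorem~3.8 of \cite{BCMWZ}. You instead sketch a self-contained proof via smoothness and G\^ateaux differentiability of $\|\cdot\|_p$, deriving the vanishing of $[Y,X]_p$ from the first-order optimality condition at $\gamma=0$. Your route is perfectly valid and has the advantage of being self-contained (indeed, the norm-derivative identity you use is exactly what the paper later records in Section~\ref{s4} around \eqref{gateaux eq}); the paper's route is shorter because the result is already in the literature that the authors themselves established. One small slip: by linearity in the first slot, $[\gamma Y,X]_p=\gamma[Y,X]_p$, so the derivative is $2\,\mathrm{Re}(\gamma[Y,X]_p)$ rather than $2\,\mathrm{Re}(\overline{\gamma}[Y,X]_p)$; this does not affect your conclusion since you quantify over all $\gamma\in\mathbb{C}$.
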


\begin{proof}
Items (1) to (4) are direct consequences from definition \eqref{def alpha}. 

(5) is valid by the equivalence $X\obj Y \Leftrightarrow [Y,X]_p=0$, proved in Theorem 3.8 in \cite{BCMWZ}.

\end{proof}

The next result characterizes the cases in which the modulus of $\alpha_{Y,X}$ attains its maximal possible value. This occurs precisely when $X$ and $Y$ are $p$-norm parallel.

\begin{theorem} \label{|alpha|=1}
	
Let $X,Y\in \bp-\{0\}$ and $p>1$. The following conditions are equivalent:
\begin{enumerate}
	\item\label{1} $|\alpha_{Y,X}|= 1$.
	\item \label{2}$X\parallel^p Y$.
	\item \label{3}$Y\parallel^pX$.
	\item \label{4}$|\alpha_{X,Y}|= 1$.
\end{enumerate}

\end{theorem}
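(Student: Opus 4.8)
The plan is to prove the cycle of implications $(1)\Rightarrow(2)\Rightarrow(3)\Rightarrow(4)\Rightarrow(1)$, exploiting the characterization of $p$-norm parallelism recalled in \eqref{characterization p-parallelism} together with the identity $[Y,X]_p=\|X\|_p^{2-p}\operatorname{tr}(|X|^{p-1}U^*Y)$ from Definition~\ref{def_sip_bp}. Note first that, directly from \eqref{def alpha} and \eqref{semi inner prod def},
\[
|\alpha_{Y,X}|=\frac{|[Y,X]_p|}{\|X\|_p\|Y\|_p}
=\frac{\|X\|_p^{2-p}\,|\operatorname{tr}(|X|^{p-1}U^*Y)|}{\|X\|_p\|Y\|_p}
=\frac{|\operatorname{tr}(|X|^{p-1}U^*Y)|}{\|X\|_p^{p-1}\|Y\|_p}.
\]
Since $\operatorname{tr}(|X|^p)=\|X\|_p^p$, the condition $|\alpha_{Y,X}|=1$ is exactly $\|Y\|_p\,\operatorname{tr}(|X|^p)=\|X\|_p\,|\operatorname{tr}(|X|^{p-1}U^*Y)|$, which is precisely \eqref{characterization p-parallelism}, i.e. $X\parallel^p Y$. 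This gives $(1)\Leftrightarrow(2)$ in one stroke, and by symmetry of the roles of the two "sides" $(4)\Leftrightarrow(3)$. So it remains to show that $p$-norm parallelism is symmetric, i.e. $(2)\Leftrightarrow(3)$.

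For $(2)\Rightarrow(3)$ (and the converse is identical by interchanging $X$ and $Y$): by definition $X\parallel^p Y$ means $\|X+\lambda Y\|_p=\|X\|_p+\|Y\|_p$ for some $\lambda\in\mathbb{T}$. Multiplying through by $\bar\lambda$ and using $|\bar\lambda|=1$, we get $\|\bar\lambda X+Y\|_p=\|\bar\lambda X\|_p+\|Y\|_p$, i.e. $\|Y+\bar\lambda X\|_p=\|Y\|_p+\|\bar\lambda X\|_p$ with $\bar\lambda\in\mathbb{T}$; but then, applying the already-proved Lemma of Section~\ref{s2} (that $X\parallel^p Y \Leftrightarrow rX\parallel^p sY$ for all $r,s\ge 0$, in particular with $r=s=1$ after absorbing the unimodular factor), we conclude $Y\parallel^p X$. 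In fact the scaling lemma is not even needed here: the relation $\|Y+\bar\lambda X\|_p=\|Y\|_p+\|X\|_p$ is literally the definition of $Y\parallel^p X$ with the witnessing unimodular scalar $\bar\lambda$. This closes the cycle.

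I do not anticipate a serious obstacle; the main point is simply to recognize that $|\alpha_{Y,X}|=1$ unwinds to exactly the trace identity \eqref{characterization p-parallelism}, so that $(1)\Leftrightarrow(2)$ and $(4)\Leftrightarrow(3)$ are immediate, and that the definition of $\parallel^p$ is manifestly symmetric because $\mathbb{T}$ is closed under complex conjugation. The only mild care required is to make sure the normalization exponents work out: one must use $2-p+ (p-1) \cdot (1) $ bookkeeping, namely that $\|X\|_p^{2-p}/(\|X\|_p\|Y\|_p)$ combines with $\operatorname{tr}(|X|^p)=\|X\|_p^p$ so that the equality $|\alpha_{Y,X}|=1$ is equivalent to $\|X\|_p|\operatorname{tr}(|X|^{p-1}U^*Y)|=\|Y\|_p\operatorname{tr}(|X|^p)$ with no leftover powers. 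Once that is checked, the proof is complete.
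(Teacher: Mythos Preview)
Your proof is correct and follows essentially the same route as the paper: both arguments reduce $|\alpha_{Y,X}|=1$ to the trace identity \eqref{characterization p-parallelism} characterizing $X\parallel^p Y$, obtain $(4)\Leftrightarrow(3)$ by the symmetric argument, and invoke the symmetry of $\parallel^p$ to link the two pairs. The only cosmetic difference is that you spell out the symmetry via the $\bar\lambda$ trick, whereas the paper simply declares it immediate.
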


\begin{proof}
The equivalence between the items \eqref{2} and \eqref{3} is immediate, as the notion of parallelism is symmetric. For this reason, it is sufficient to show that the first two items are equivalent. Now, we assume that item \eqref{2} holds. Then, by  \eqref{characterization p-parallelism} we have 
\begin{equation*}
\|X\|_p\left|\tr\left( |X|^{p-1}U^* Y\right)  \right|=\|Y\|_p\tr(|X|^p),
\end{equation*}
or equivalently, 
\begin{equation*}
\|X\|_p^{1-p}\left|\tr\left( |X|^{p-1}U^* Y\right)  \right|=\|Y\|_p.
\end{equation*}
Multiplying both terms of the equality by $\|X\|_p$, we obtain 
\begin{equation*}
\|X\|_p^{2-p}\left|\tr\left( |X|^{p-1}U^* Y\right)  \right|=\|Y\|_p\|X\|_p.
\end{equation*}
This means that, $|[Y,X]_p|=\|Y\|_p\|X\|_p$, and thus \eqref{1} is satisfied.  Now, if \eqref{1} holds true, then $|[Y,X]_p|=\|Y\|_p\|X\|_p $,  which implies that $X\parallel^pY$. 
By an analogous argument, we prove that \eqref{3} and \eqref{4} are equivalent, and thus the proof is complete.
\end{proof}

%
%
%
%

Let us note that if  $X, Y\in \bp$ with $p>1$ and $X, Y$ linearly dependent, by Theorem \ref{|alpha|=1}, it follows that $|\alpha_{Y,X}| = |\alpha_{X,Y}| = 1$. In the following corollary, we also show that if $Y = aX$ with $a \in \mathbb{R}$, then $\alpha$ is symmetric. More precisely,

\begin{corollary}
Let $X, Y\in \bp$ with $p> 1$  such that $Y=aX$ for some $a\in \R$, then $\alpha_{Y,X}=\alpha_{X,Y}$.
\end{corollary}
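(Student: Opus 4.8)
The plan is to reduce everything to the scalar homogeneity and normalization properties already recorded in Proposition~\ref{properties of alpha}. Since the assertion concerns the actual value of $\alpha$ (not merely its modulus), Theorem~\ref{|alpha|=1} by itself is not sufficient; instead I would compute both $\alpha_{Y,X}$ and $\alpha_{X,Y}$ explicitly under the hypothesis $Y=aX$. First I would dispose of the degenerate situations: if $X=0$ then $Y=0$ and the statement is vacuous, and if $a=0$ then $Y=0$, so one reads the corollary with $X\neq 0$ and $a\in\R\setminus\{0\}$, which is exactly what makes the two quotients defining $\alpha_{Y,X}$ and $\alpha_{X,Y}$ meaningful.

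Next, apply item~(1) of Proposition~\ref{properties of alpha} in two ways. Writing $\alpha_{Y,X}=\alpha_{aX,\,1\cdot X}$ and using that proposition with the vector $X$ playing both roles gives
\[
\alpha_{Y,X}=\alpha_{aX,X}=\frac{\overline{1}\,a}{|a|\,|1|}\,\alpha_{X,X}=\frac{a}{|a|},
\]
where the last equality uses $\alpha_{X,X}=1$ from item~(3) of the same proposition. Symmetrically, writing $\alpha_{X,Y}=\alpha_{1\cdot X,\,aX}$ gives
\[
\alpha_{X,Y}=\alpha_{X,aX}=\frac{\overline{a}\,1}{|1|\,|a|}\,\alpha_{X,X}=\frac{\overline{a}}{|a|}.
\]
Since $a\in\R$, we have $\overline{a}=a$, so both scalars equal $\operatorname{sgn}(a)$, and therefore $\alpha_{Y,X}=\alpha_{X,Y}$. (Equivalently, one may avoid invoking the proposition and compute directly from Definition~\ref{def alpha}, using linearity of $[\cdot,\cdot]_p$ in the first slot and conjugate-homogeneity in the second, together with $[X,X]_p=\|X\|_p^2$ and $\|aX\|_p=|a|\,\|X\|_p$.)

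There is essentially no obstacle in this argument; the only point deserving a moment's care is that the conclusion genuinely relies on $a$ being real — for a general $a\in\C$ one obtains $\alpha_{X,aX}=\overline{a}/|a|$ against $\alpha_{aX,X}=a/|a|$, which agree precisely when $a\in\R$ — and that the hypotheses $X\neq 0$, $a\neq 0$ are needed to make the expressions well defined.
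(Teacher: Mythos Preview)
Your proof is correct and follows essentially the same route as the paper: both apply the scalar-homogeneity item of Proposition~\ref{properties of alpha} together with $\alpha_{X,X}=1$ to obtain $\alpha_{aX,X}=a/|a|$ and $\alpha_{X,aX}=\overline{a}/|a|$, and then use $a\in\R$ to conclude. Your additional remarks on the degenerate cases $X=0$, $a=0$ and on why reality of $a$ is essential are welcome clarifications but do not alter the argument.
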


\begin{proof}
Observe that from Proposition \ref{properties of alpha} we obtain
$$\alpha_{Y,X}=\alpha_{aX,X}=\frac{a}{|a|}\alpha_{X,X}\ \text{and } \alpha_{X,Y}=\alpha_{X,aX}=\frac{\overline{a}}{|a|}\alpha_{X,X}.$$
Since $a\in \R$, we conclude that $\alpha_{Y,X}=\alpha_{X,Y}$.
\end{proof}

We will prove later that the converse of this corollary is not true. See Example \ref{ej no trivial} for clarification.

\bigskip

We now derive a Cauchy–Schwarz-type inequality for $\alpha$ and $\beta$, where the bounds are expressed in terms of the moduli of the associated operators.

\begin{theorem}
Let $X,Y\in \bp$, $p> 1$, then we have 
\begin{equation} \label{cota CS alpha}
|\alpha_{Y,X}|^2\leq \alpha_{|Y^*|,|X^*|}\alpha_{|Y|,|X|}
\end{equation}
and
\begin{equation} \label{cota CS beta}
\beta_{Y,X}^2\leq \beta_{|Y^*|,|X^*|}\beta_{|Y|,|X|}.
\end{equation}

\end{theorem}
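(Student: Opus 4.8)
The plan is to deduce both inequalities directly from the Cauchy--Schwarz-type inequality already established in Theorem~\ref{theo CS en bp}, by dividing through by an appropriate positive quantity. First I would note that the result is trivial if $X=0$ or $Y=0$, since then every $\alpha$ in sight vanishes; so assume $X,Y\neq 0$ and fix polar decompositions $X=U|X|$, $Y=V|Y|$. The key observation is that the Schatten norms of an operator, of its absolute value, and of its adjoint all coincide: $\|X\|_p=\|\,|X|\,\|_p=\|X^*\|_p=\|\,|X^*|\,\|_p$, and likewise $\|Y\|_p=\|\,|Y|\,\|_p=\|Y^*\|_p=\|\,|Y^*|\,\|_p$. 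Hence, dividing \eqref{CS en bp} by the positive number $\|X\|_p^2\|Y\|_p^2=\bigl(\|X^*\|_p\|Y^*\|_p\bigr)\bigl(\|X\|_p\|Y\|_p\bigr)$ and recalling that $[|Y^*|,|X^*|]_p\geq 0$ and $[|Y|,|X|]_p\geq 0$ by item (4) of Lemma~\ref{lem1}, I get
\[
|\alpha_{Y,X}|^2
=\frac{|[Y,X]_p|^2}{\|X\|_p^2\|Y\|_p^2}
\leq \frac{[|Y^*|,|X^*|]_p}{\|X^*\|_p\|Y^*\|_p}\cdot\frac{[|Y|,|X|]_p}{\|X\|_p\|Y\|_p}
=\alpha_{|Y^*|,|X^*|}\,\alpha_{|Y|,|X|},
\]
which is \eqref{cota CS alpha}. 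One should check here that $\alpha_{|Y^*|,|X^*|}$ and $\alpha_{|Y|,|X|}$ are exactly these ratios: this is immediate from Definition~\ref{def alpha} together with the norm identities above.

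For \eqref{cota CS beta}, the point is that $\alpha_{|Y|,|X|}$ and $\alpha_{|Y^*|,|X^*|}$ are already nonnegative real numbers. Indeed, $\alpha_{|Y|,|X|}=[|Y|,|X|]_p/(\|X\|_p\|Y\|_p)\geq 0$ by item (4) of Lemma~\ref{lem1}, and similarly for the adjoint version; consequently $\beta_{|Y|,|X|}={\rm Re}(\alpha_{|Y|,|X|})=\alpha_{|Y|,|X|}$ and $\beta_{|Y^*|,|X^*|}=\alpha_{|Y^*|,|X^*|}$. Combining this with the elementary bound $\beta_{Y,X}^2=({\rm Re}\,\alpha_{Y,X})^2\leq|\alpha_{Y,X}|^2$ and the inequality \eqref{cota CS alpha} just proved yields
\[
\beta_{Y,X}^2\leq|\alpha_{Y,X}|^2\leq \alpha_{|Y^*|,|X^*|}\,\alpha_{|Y|,|X|}=\beta_{|Y^*|,|X^*|}\,\beta_{|Y|,|X|},
\]
which is \eqref{cota CS beta}.

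I do not anticipate a genuine obstacle here: the theorem is essentially a repackaging of Theorem~\ref{theo CS en bp} in the normalized language of Definition~\ref{semiangles}. The only points requiring a moment's care are (i) verifying the norm identities $\|X\|_p=\|X^*\|_p$, etc., which follow because $X$, $X^*$, $|X|$, $|X^*|$ all share the same singular values (the nonzero spectra of $|X|$ and $|X^*|=U|X|U^*$ coincide), and (ii) confirming the positivity and realness of the right-hand factors so that the passage from $\alpha$ to $\beta$ is legitimate and no spurious sign issues arise when squaring.
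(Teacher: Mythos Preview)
Your proposal is correct and follows essentially the same approach as the paper: divide \eqref{CS en bp} by $\|X\|_p^2\|Y\|_p^2$ to obtain \eqref{cota CS alpha}, then use $\beta_{Y,X}^2\le|\alpha_{Y,X}|^2$ together with the fact that $\alpha_{|Y|,|X|}$ and $\alpha_{|Y^*|,|X^*|}$ are nonnegative real (hence equal to their $\beta$-versions) to get \eqref{cota CS beta}. Your version is slightly more explicit about the norm identities $\|X\|_p=\|X^*\|_p=\|\,|X|\,\|_p$ and the appeal to Lemma~\ref{lem1}(4), but the argument is the same.
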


\begin{proof}
Taking the inequality in \eqref{CS en bp} and dividing both sides by $\|X\|_p^2\|Y\|_p^2$, we obtain that
$$\dfrac{|[Y,X]_p|^2}{\|X\|_p^2\|Y\|_p^2}\leq \dfrac{[|Y^*|,|X^*|]_p}{\|X\|_p\|Y\|_p} \dfrac{[|Y|,|X|]_p}{\|X\|_p\|Y\|_p}. $$
Therefore, 
$$|\alpha_{Y,X}|^2\leq \alpha_{|Y^*|,|X^*|}\alpha_{|Y|,|X|}.$$
On the other hand, 
$$\beta_{Y,X}^2={\rm Re}(\alpha_{Y,X})^2\leq |\alpha_{Y,X}|^2\leq \alpha_{|Y^*|,|X^*|}\alpha_{|Y|,|X|}={\rm Re}\left( \alpha_{|Y^*|,|X^*|}\right) {\rm Re}\left( \alpha_{|Y|,|X|}\right).$$

\end{proof}

In general, $[Y,X]_p \neq [X,Y]_p$, and therefore $\alpha_{Y,X} \neq \alpha_{X,Y}$; that is, the relation is non-symmetric. However, when $X$ and $Y$ have disjoint supports—which implies $XY^* = Y^*X = 0$ (see \cite{BCMWZ})— then
\begin{equation}\label{alpha=0}
	\alpha_{Y,X}=0=\alpha_{X,Y}.
\end{equation}
In this particular case (of disjoint support), we have simultaneously that $X\obj Y$ and $Y\obj X$.

The following result, which stems directly from Proposition \ref{properties of alpha}, item \eqref{prop ortogonal}, establishes that the mutual Birkhoff-James $p$-orthogonality between two operators is both a necessary and sufficient condition for the identity \eqref{alpha=0} to hold.

\begin{corollary} \label{coro equiv}
	Let $p> 1$ and $X,Y\in \bp$. Then, the following conditions are equivalent:
	\begin{enumerate}
		\item $\alpha_{Y,X}=\alpha_{X,Y}=0$.
		\item $X\obj Y$ and $Y\obj X$.
	\end{enumerate}
\end{corollary}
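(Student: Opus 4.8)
The plan is to observe that Corollary~\ref{coro equiv} is an immediate logical rearrangement of the equivalence recorded in Proposition~\ref{properties of alpha}, item~\eqref{prop ortogonal}. That item states $\alpha_{Y,X}=0 \iff X\obj Y$. Applying this once with the roles of $X$ and $Y$ as written gives $\alpha_{Y,X}=0 \iff X\obj Y$, and applying it again with $X$ and $Y$ interchanged gives $\alpha_{X,Y}=0 \iff Y\obj X$. Conjoining these two biconditionals yields exactly the claimed equivalence, since a conjunction of equivalent statements is equivalent to the conjunction of the other sides.

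Concretely, the proof I would write is: first recall from Proposition~\ref{properties of alpha}\eqref{prop ortogonal} that for any $A,B\in\bp$ one has $\alpha_{B,A}=0$ if and only if $A\obj B$. Then instantiate this with $(A,B)=(X,Y)$ to get $\alpha_{Y,X}=0 \Leftrightarrow X\obj Y$, and with $(A,B)=(Y,X)$ to get $\alpha_{X,Y}=0 \Leftrightarrow Y\obj X$. Finally, conclude that $\alpha_{Y,X}=\alpha_{X,Y}=0$ holds precisely when both $X\obj Y$ and $Y\obj X$ hold.

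There is essentially no obstacle here: the substantive content — the equivalence $X\obj Y \Leftrightarrow [Y,X]_p=0$, hence $\alpha_{Y,X}=0$ — was already established in \cite[Theorem 3.8]{BCMWZ} and repackaged in Proposition~\ref{properties of alpha}. The only thing to be careful about is the asymmetry of the semi-inner product: one must not assume $\alpha_{Y,X}=0$ implies $\alpha_{X,Y}=0$, which is why \emph{both} conditions must be listed explicitly in each part of the statement. The proof is therefore a two-line deduction, and I would present it as such:

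\begin{proof}
By Proposition~\ref{properties of alpha}, item~\eqref{prop ortogonal}, for any $A,B\in\bp$ we have $\alpha_{B,A}=0$ if and only if $A\obj B$. Applying this with $(A,B)=(X,Y)$ gives $\alpha_{Y,X}=0$ if and only if $X\obj Y$, and applying it with $(A,B)=(Y,X)$ gives $\alpha_{X,Y}=0$ if and only if $Y\obj X$. Combining these two equivalences, $\alpha_{Y,X}=\alpha_{X,Y}=0$ holds if and only if $X\obj Y$ and $Y\obj X$.
\end{proof}
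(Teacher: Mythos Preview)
Your proof is correct and matches the paper's approach exactly: the paper does not even write out a separate proof for this corollary, stating only that it ``stems directly from Proposition~\ref{properties of alpha}, item~\eqref{prop ortogonal},'' which is precisely the two-line deduction you give.
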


In the context of positive operators, we can derive \eqref{alpha=0} by relaxing the hypothesis of the previous corollary.

\begin{proposition}\label{prop: positivos y ortogonalidad implican alpha igual a cero}
Let $X,Y\in\bp^+$, $p>1$ such that $X\obj Y$. Then,
$\alpha_{Y,X}=\alpha_{X,Y}=0$.
\end{proposition}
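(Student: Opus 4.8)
The plan is to exploit the fact that for \emph{positive} operators $X$ and $Y$ the data entering $\alpha_{Y,X}$ and $\alpha_{X,Y}$ simplify dramatically, and then to translate the Birkhoff--James hypothesis into a trace identity that forces both quantities to vanish. Since $X,Y\in\bp^+$, we may take $U=V=I$ in their polar decompositions, so $|X|=X$, $|X^*|=X$, $|Y|=Y$, $|Y^*|=Y$. In particular $[Y,X]_p=\|X\|_p^{2-p}\tr(X^{p-1}Y)$ and, by Lemma~\ref{lem1}(1), $[Y,X]_p=[|Y|,|X|]_p=[|Y^*|,|X^*|]_p\geq 0$ by Lemma~\ref{lem1}(4); likewise $[X,Y]_p=\|Y\|_p^{2-p}\tr(Y^{p-1}X)\geq 0$.

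First I would use Proposition~\ref{properties of alpha}(5): the hypothesis $X\obj Y$ is equivalent to $[Y,X]_p=0$, hence $\alpha_{Y,X}=0$ immediately. The real work is to deduce $\alpha_{X,Y}=0$, i.e.\ $\tr(Y^{p-1}X)=0$, from $\tr(X^{p-1}Y)=0$. The key step is to pass from the ``$(p-1)$-th power'' statement to a bare product statement. From $\tr(X^{p-1}Y)=0$ with $X^{p-1},Y\in\mathcal B(\h)^+$, Lemma~\ref{lem trAB} gives $X^{p-1}Y=0$; since $X^{p-1}$ and $X$ have the same range and kernel (functional calculus on the positive operator $X$), this yields $XY=0$, and equivalently $YX=0$. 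Then $Y^{p-1}X=0$ as well — again because $Y^{p-1}$ has the same kernel as $Y$, so $Y^{p-1}X = Y^{p-1}(\text{something annihilated by }Y\text{ on the left})$; more directly, $YX=0 \Rightarrow X Y=0 \Rightarrow$ by Lemma~\ref{lem trAB} applied to $A=Y,B=X$... I would instead argue symmetrically: $XY=0$ and $YX=0$ imply, for the positive operators $Y^{p-1}$ and $X$, that $Y^{p-1}X=0$ since $Y^{p-1}$ is a norm-limit of polynomials in $Y$ with no constant term (as $Y$ has trivial... — to avoid the constant-term subtlety I would simply invoke that $Y^{p-1}=f(Y)$ with $f(0)=0$, so on $\ker Y$ it is $0$ and on $(\ker Y)^\perp=\overline{\operatorname{ran} Y}$ it maps into that subspace; since $\operatorname{ran}X\subseteq\ker Y$ from $YX=0$, we get $Y^{p-1}X=0$). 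Hence $\tr(Y^{p-1}X)=0$, so $[X,Y]_p=0$ and $\alpha_{X,Y}=0$.

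I expect the main obstacle to be the passage $X^{p-1}Y=0 \Rightarrow XY=0$ and its symmetric counterpart: one must be careful that $p-1$ need not be an integer, so ``$X^{p-1}$'' is understood via the continuous functional calculus for the positive operator $X$, and the relevant fact is the elementary but essential observation that for $r>0$ the operators $X^r$ and $X$ have identical kernels (equivalently, identical closed ranges). Once this is in hand, everything else is bookkeeping with Lemma~\ref{lem trAB}, which precisely packages the equivalences $\tr(AB)=0 \Leftrightarrow AB=0 \Leftrightarrow BA=0$ for positive $A,B$. Alternatively — and perhaps more cleanly — I would phrase the whole argument through the Hilbert--Schmidt factorization of Lemma~\ref{lem1}(2): $\tr(X^{p-1}Y)=\|X^{(p-1)/2}Y^{1/2}\|_2^2=0$ forces $X^{(p-1)/2}Y^{1/2}=0$, whence $XY^{1/2}=0$ (same-kernel principle on $X$), whence $Y^{1/2}X=0$, whence $Y^{(p-1)/2}X^{1/2}=0$ (same-kernel principle on $Y$), and finally $\tr(Y^{p-1}X)=\|Y^{(p-1)/2}X^{1/2}\|_2^2=0$. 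This routes everything through Hilbert--Schmidt norms and the single clean lemma, and makes the positivity hypotheses do exactly the work they are meant to.
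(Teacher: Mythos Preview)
Your proof is correct, but it takes a different route from the paper's. The paper dispatches the proposition in one line by citing external results: Proposition~3.1 and Theorem~3.2 of \cite{BCMWZ} give that for positive operators in $\bp$ Birkhoff--James orthogonality is symmetric ($X\obj Y\Rightarrow Y\obj X$), and then Corollary~\ref{coro equiv} finishes. You instead unpack this symmetry directly and self-containedly: from $\tr(X^{p-1}Y)=0$ you use Lemma~\ref{lem trAB} to obtain $X^{p-1}Y=0$, invoke the equal-kernel principle $\ker X^r=\ker X$ for $r>0$ to get $XY=0$, take adjoints to get $YX=0$, and run the same principle on $Y$ to conclude $\tr(Y^{p-1}X)=0$. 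What your approach buys is independence from the external reference and transparency about \emph{why} positivity makes orthogonality symmetric here (it is exactly the passage $\tr(AB)=0\Leftrightarrow AB=0$ for positive $A,B$ together with the functional-calculus kernel identity); what the paper's approach buys is brevity. Your Hilbert--Schmidt variant via Lemma~\ref{lem1}(2) is the cleaner of your two write-ups and is essentially the same argument rephrased.
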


\begin{proof}
By Proposition 3.1 and Theorem 3.2 in \cite{BCMWZ}, if $X\obj Y$ then $Y\obj X$ and we conclude using Corollary \ref{coro equiv} that $\alpha_{Y,X}=\alpha_{X,Y}=0$.
\end{proof}

%
%
%

Next, in the finite‐dimensional setting and under the assumption that one of the operators is the identity, we give necessary and sufficient conditions for $
	\alpha_{I,X} = \alpha_{X,I}
	$,
	that is, for the scalar $\alpha$ to be symmetric.

\begin{proposition} \label{equiv zero trace}
Let $Z$ be in the algebra $M_n(\C)$ of all complex $n\times n$ matrices, with polar decomposition $Z=U|Z|$ and $p> 1$. Then the following statements are equivalent:
\begin{enumerate}
	\item\label{11} $[Z,I]_p=[I,Z]_p$.
	\item \label{22} $ \tr\left( \left( n^{ \frac 2p-1}|Z|-\|Z\|_p^{2-p}|Z|^{p-1}\right)U^*\right) =0 $.
	\item \label{33} $I\obj \left( n^{ \frac 2p-1}|Z|-\|Z\|_p^{2-p}|Z|^{p-1}\right)U^* $.
\end{enumerate}
\end{proposition}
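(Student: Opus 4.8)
The plan is to compute both sides of the identity in item~\eqref{11} explicitly using the definition \eqref{semi inner prod def} of $[\,\cdot,\cdot\,]_p$, and then to rephrase the resulting scalar equation first as a trace condition (item~\eqref{22}) and then as a Birkhoff--James orthogonality relation (item~\eqref{33}), using Proposition~\ref{properties of alpha}\eqref{prop ortogonal}. First I would observe that in $M_n(\mathbb{C})$ the identity has polar decomposition $I = I\cdot I$, so $\|I\|_p = n^{1/p}$ and hence, applying \eqref{semi inner prod def} with $X = I$, $Y = Z$,
\[
[Z,I]_p = \|I\|_p^{2-p}\,\tr\!\left(I^{p-1} I^{*} Z\right) = n^{\frac{2-p}{p}}\,\tr(Z) = n^{\frac 2p - 1}\,\tr(Z).
\]
On the other hand, taking $X = Z = U|Z|$ and $Y = I$ in \eqref{semi inner prod def} gives
\[
[I,Z]_p = \|Z\|_p^{2-p}\,\tr\!\left(|Z|^{p-1} U^{*}\right).
\]

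Next I would equate the two expressions. Since $\tr(Z) = \tr(U|Z|) = \tr(|Z| U) $, and more usefully $\tr(Z) = \tr(U|Z|) $, I want to bring both terms under a common form involving $U^{*}$. Writing $\tr(Z) = \tr(U|Z|)$ is not yet of the shape in \eqref{22}; instead I note $\tr(Z) = \overline{\tr(Z^{*})}$ is not needed — the cleaner route is to use $\tr(|Z|U^{*})$? Here one must be careful, so the key algebraic step is the identity $\tr(Z) = \tr(|Z|\,U^{*}\cdot UU^{*})$\dots\ Rather, the right manipulation is: from $Z = U|Z|$ and $U^{*}U|Z| = |Z|$ (see \eqref{prop_polar_decomp}) we get $\tr(Z) = \tr(U|Z|)$, and applying the trace's cyclicity together with $U^{*}UZ = Z$ one checks $\tr(Z) = \tr\!\big((U|Z|U^{*})U\big)$; but the presentation in \eqref{22} suggests instead factoring $U^{*}$ on the \emph{right}. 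So the identity to use is simply $\tr(Z) = \tr(|Z|U^{*})^{\;}$? This fails in general. The correct and safe observation is that \eqref{11} reads
\[
n^{\frac 2p - 1}\tr(Z) \;=\; \|Z\|_p^{2-p}\,\tr\!\left(|Z|^{p-1}U^{*}\right),
\]
and that $\tr(Z)=\tr(U|Z|)$ can be written as $\tr(n^{\frac 2p - 1}|Z|\,U^{*})$ only after replacing $U^{*}$ by the correct factor; the clean way is to note that \emph{both} traces can be expressed as $\tr\!\big((\,\cdot\,)U^{*}\big)$ once we write $\tr(Z) = \tr(U|Z|) = \tr\big((U|Z|U^{*})\,U\big)$ and observe $\tr(AU)=\tr(A U)$. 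To keep the plan honest: the key step is rewriting $\tr(Z)=\tr(|Z|U^{*}U|Z|^{0})$-type gymnastics; concretely one uses that for $A=n^{\frac 2p-1}|Z|$ one has $\tr(AU^{*}) \ne \tr(Z)$ in general, so the honest identity is $\tr(Z)=\tr(U|Z|)$ and the right-hand side of \eqref{11} is $\tr\big((\|Z\|_p^{2-p}|Z|^{p-1})U^{*}\big)$; subtracting, the statement \eqref{11} becomes
\[
0 = \tr\!\Big(\big(n^{\frac 2p - 1}|Z| - \|Z\|_p^{2-p}|Z|^{p-1}\big)U^{*}\Big),
\]
provided $\tr(U|Z|) = \tr(|Z|U^{*})$ — and this last equality holds because $\tr(U|Z|)=\tr(|Z|U)$ by cyclicity, while $\tr(|Z|U)$ and $\tr(|Z|U^{*})$ agree when $|Z|$ and the relevant part of $U$ interact appropriately; this is precisely where I must be careful, and I would verify it by passing to the singular value decomposition $Z = \sum s_j\, \langle\cdot,f_j\rangle e_j$, for which $U = \sum \langle\cdot,f_j\rangle e_j$ and $|Z| = \sum s_j\,\langle\cdot,f_j\rangle f_j$, so that $\tr(U|Z|) = \sum_j s_j \langle f_j, e_j\rangle$ while $\tr(|Z|^{p-1}U^{*}) = \sum_j s_j^{p-1}\langle e_j, f_j\rangle$. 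Hence the two sides of \eqref{11} are $n^{\frac 2p-1}\sum s_j\langle f_j,e_j\rangle$ and $\|Z\|_p^{2-p}\sum s_j^{p-1}\langle e_j,f_j\rangle$, and equating them does \emph{not} immediately give a $\tr((\cdot)U^{*})$ with a self-adjoint argument unless one takes real/imaginary parts — so the statement as printed presumably intends the equality of these two complex numbers, which is exactly $\tr\big((n^{\frac 2p-1}|Z| - \|Z\|_p^{2-p}|Z|^{p-1})U^{*}\big) = 0$ once one notes $\overline{\langle f_j,e_j\rangle} = \langle e_j,f_j\rangle$ forces the first sum to be the conjugate of a $\tr((\cdot)U^{*})$ expression; I will sort out the conjugation bookkeeping here, and this is the main obstacle.

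Once the equivalence \eqref{11} $\Leftrightarrow$ \eqref{22} is established, the equivalence \eqref{22} $\Leftrightarrow$ \eqref{33} is immediate: set $W = \big(n^{\frac 2p - 1}|Z| - \|Z\|_p^{2-p}|Z|^{p-1}\big)U^{*}$. Then by Definition~\ref{def_sip_bp} applied with $X = I = I\cdot I$ (whose polar decomposition has partial isometry $I$), we have
\[
[W, I]_p = \|I\|_p^{2-p}\,\tr(I^{p-1} I^{*} W) = n^{\frac{2-p}{p}}\,\tr(W),
\]
so $\tr(W) = 0$ if and only if $[W,I]_p = 0$, which by Proposition~\ref{properties of alpha}\eqref{prop ortogonal} (equivalently Theorem~3.8 of \cite{BCMWZ}) is equivalent to $I \obj W$, i.e. to item~\eqref{33}. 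I expect the only genuinely delicate point to be the conjugation/real-part bookkeeping in the first equivalence; everything else is a direct unwinding of the definitions together with cyclicity of the trace and the polar-decomposition identities in \eqref{prop_polar_decomp}.
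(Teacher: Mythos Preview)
Your overall plan matches the paper's proof exactly: compute $[Z,I]_p=n^{2/p-1}\tr(Z)$ and $[I,Z]_p=\|Z\|_p^{2-p}\tr(|Z|^{p-1}U^*)$, rewrite their equality as a single vanishing trace, and then use that $I\obj W$ is equivalent to $\tr(W)=0$ (the paper cites \cite{Kitta-zero trace} together with \cite[Theorem~3.8]{BCMWZ}; your appeal to Proposition~\ref{properties of alpha}\eqref{prop ortogonal} is the same mechanism). For \eqref{11}$\Leftrightarrow$\eqref{22} the paper simply writes ``using the polar decomposition of $Z=U|Z|$, we obtain'' and passes from $\tr\!\big(n^{2/p-1}Z-\|Z\|_p^{2-p}|Z|^{p-1}U^*\big)=0$ directly to item~\eqref{22}, without further comment.

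The conjugation issue you singled out as ``the main obstacle'' is a genuine gap, and the paper's one-line passage does not resolve it either. From \eqref{prop_polar_decomp} one has $Z^*=|Z|U^*$, so $\tr(|Z|U^*)=\tr(Z^*)=\overline{\tr(Z)}$, not $\tr(Z)$; replacing $n^{2/p-1}Z$ by $n^{2/p-1}|Z|\,U^*$ therefore silently conjugates the first term. This is not harmless: for $Z=\operatorname{diag}(i,1)\in M_2(\C)$ one has $|Z|=I$ and $U=Z$, so $n^{2/p-1}|Z|-\|Z\|_p^{2-p}|Z|^{p-1}=0$ and item~\eqref{22} holds trivially, whereas $[Z,I]_p=2^{2/p-1}(1+i)\neq 2^{2/p-1}(1-i)=[I,Z]_p$, so item~\eqref{11} fails. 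Hence the bookkeeping you planned to ``sort out'' cannot be completed as the statement stands; the equivalence \eqref{11}$\Leftrightarrow$\eqref{22} needs an extra hypothesis (e.g.\ $\tr(Z)\in\R$) or a reformulation of \eqref{22}. Your treatment of \eqref{22}$\Leftrightarrow$\eqref{33} is correct and coincides with the paper's.
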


\begin{proof}
	\eqref{11} $\Leftrightarrow$ \eqref{22} Let us observe that if the equality $[Z,I]_p=[I,Z]_p$ holds for some $n\in \mathbb{N}$, $p> 1$ and some matrix $Z\in M_n(\C)$ then 
$$ \|I\|_p\tr\left(Z \right)=\|Z\|_p^{2-p}\tr\left(|Z|^{p-1}U^* \right),$$
or equivalently,  by the additivity and homogeneity of the trace, we have 
$$\tr\left(n^{\frac2p-1}Z-\|Z\|_p^{2-p}|Z|^{p-1}U^* \right)=0.$$

Using the polar decomposition of $Z=U|Z|$, we obtain 

$$\tr\left(\left( n^{\frac2p-1}|Z|-\|Z\|_p^{2-p}|Z|^{p-1}\right) U^* \right)=0.$$

\eqref{22} $\Leftrightarrow$ \eqref{33} We recall that, since $\left( n^{\frac2p-1}|Z|-\|Z\|_p^{2-p}|Z|^{p-1}\right) U^*$ is a matrix with trace zero, it follows  by \cite{Kitta-zero trace} and Theorem 3.8 in \cite{BCMWZ} this is equivalent to 
$$I\perp^p\left( n^{\frac2p-1}|Z|-\|Z\|_p^{2-p}|Z|^{p-1}\right) U^*.$$

\end{proof}

If $p\geq 2$, we additionally obtain that 
\begin{equation}\label{equiv}
[Z,I]_p=[I,Z]_p \quad\textrm{if and only if}\quad  \tr\left( \left( n^{ \frac 2p-1}I-\|Z\|_p^{2-p}|Z|^{p-2}\right)Z\right) =0.
\end{equation}

The following example illustrates the existence of non‑trivial matrices that commute with the identity under the $[\cdot,\cdot]_p$ bracket.

\begin{example}\label{ej no trivial}

		
	We consider the case of an even integer $n$ and $p > 1$, let  
	$$Z = \text{Diag}(a, -a, a, -a, \ldots, a, -a) \in M_n(\C),$$ 
	where $a > 0$. We can see that \(\|I\|_p = n^{\frac{2}{p} - 1}\), \(\|Z\|_p^{2 - p} = n^{\frac{2}{p} - 1}a^{2 - p}\), and \(|Z| = aI\). Therefore:
		\[
		\mathrm{tr}\left( \left( n^{\frac{2}{p} - 1}I - n^{\frac{2}{p} - 1}a^{2 - p}a^{p - 2}I \right) Z \right) = 0.
		\]
		Then, by Proposition \ref{equiv zero trace} and equation \eqref{equiv}, this leads to $[I, Z]_p = [Z, I]_p$.
		
	 Lastly, for an odd integer $n$ and $p \geq 1$, we consider
	  $$Z = \text{Diag}(a, -a, a, -a, \ldots, a - a, 0) \in \mathbb{C}^{n \times n}$$ with \(a > 0\), we observe that $\|I\|_p = n^{\frac{2}{p} - 1}$ and $\|Z\|_p^{2 - p} = n^{\frac{2}{p} - 1}a^{2 - p}$. Thus:
		\begin{align*}
		\mathrm{tr}\left( \left( n^{\frac{2}{p} - 1}I - \|Z\|_p^{2 - p}|Z|^{p - 2} \right) Z \right) &= \sum_{i = 1}^{n - 1} \left( n^{\frac{2}{p} - 1} - n^{\frac{2}{p} - 1}a^{2 - p}a^{p - 2} \right) Z_{ii} \\
		& \quad + \left( n^{\frac{2}{p} - 1} - n^{\frac{2}{p} - 1}a^{2 - p} \cdot 0 \right) Z_{nn} \\
		&= 0.
		\end{align*}
		This equivalence confirms that $[I, Z]_p = [Z, I]_p$ for any $n\in \mathbb{N}$ and $p> 1.$

\end{example}

In the previous example, any matrix $Z$ satisfying
\[
[I, Z]_p = [Z, I]_p
\]
for $p > 1$ must have $\operatorname{tr}(Z)=0$. However, the following example shows that this trace condition alone does not suffice to ensure symmetry of the semi‑inner product.

Let

\[
Z = \begin{pmatrix} 
1 & 2 \\
-2 & -1
\end{pmatrix} 
= U|Z| 
= \begin{pmatrix} 
\frac{1}{\sqrt{5}} & \frac{2}{\sqrt{5}} \\
-\frac{2}{\sqrt{5}} & -\frac{1}{\sqrt{5}}
\end{pmatrix} 
\begin{pmatrix} 
\sqrt{5} & 0 \\
0 & \sqrt{5} 
\end{pmatrix}.
\]

Then,

\[
[Z, I]_p = \|I\|_p^{2-p} \operatorname{tr} \left(Z \right) = 0.
\]

Next, we calculate $[I, Z]_p$. Using the polar decomposition $Z = U |Z| $, we have:

\[
|Z|^{p-1} U^* = \begin{pmatrix} 
\left( \sqrt{5} \right)^{p-1} & 0 \\
0 & \left( \sqrt{5} \right)^{p-1}
\end{pmatrix}
\begin{pmatrix} 
\frac{1}{\sqrt{5}} & -\frac{2}{\sqrt{5}} \\
\frac{2}{\sqrt{5}} & \frac{1}{\sqrt{5}}
\end{pmatrix}
= \begin{pmatrix} 
\left( \sqrt{5} \right)^{p-2} & -2 \left( \sqrt{5} \right)^{p-2} \\
2 \left( \sqrt{5} \right)^{p-2} & \left( \sqrt{5} \right)^{p-2}
\end{pmatrix}.
\]

Therefore,
\[
[I, Z]_p = \|Z\|_p^{2-p} \operatorname{tr} \left( |Z|^{p-1} U^* \right) =  2 \left( \sqrt{5} \right)^{p-2}\|Z\|_p^{2-p}.
\]

Thus, we have demonstrated that $ Z$ is a matrix with trace zero such that $
[Z, I]_p \neq [I, Z]_p$ and this inequality holds for any $p > 1$.


\subsection{Different notions of angles in the space \(\bp\)}

	To conclude this manuscript, we explore various notions of angles in the space $\bp$, defined through the properties of the semi-inner product. These notions are designed to recover and reflect key geometric features of the ideal. As a starting point, we briefly recall several definitions of angles that have been introduced over the past decades in the context of real normed spaces. This foundational background will guide the development of angle concepts adapted to \(\bp\), grounded in the semi-inner product structure.

Let $(\mathcal{X},\|\cdot\|)$ be an arbitrary Banach space. The norm $\|\cdot\|$ is said to be Gâteaux differentiable at a nonzero element $x \in \mathcal{X}$ if  
\begin{equation}\label{gateaux eq}
\lim_{t\to 0^+}\frac{\|x+ty\|-\|x\|}{t}=\text{Re} D_x(y), \quad \text{for all } y\in\mathcal{X},
\end{equation}  
where $D_x$ is the unique functional in the dual space $\mathcal{X}^*$ satisfying  
\[
D_x(x)=\|x\| \quad \text{and} \quad \|D_x\|=1.
\]

In particular, when $\mathcal{X}=\mathcal{B}_p(\mathcal{H})$ with $1<p<\infty$, the space is uniformly convex and hence Gâteaux differentiable at every nonzero $X\in\mathcal{B}_p(\mathcal{H})$.  If $X=U|X|$ is the polar decomposition of $X$, then the corresponding functional $D_X\in\mathcal{B}_p(\mathcal{H})^*$  is given by
\[
D_X(Y)
=\|X\|_p^{\,1-p}\,\mathrm{tr}\bigl(|X|^{p-1}U^*Y\bigr)
\;=\;\frac{1}{\|X\|_p}\,[Y,X]_p,
\]
at every $ Y\in\mathcal{B}_p(\mathcal{H})$, with $[\cdot,\cdot]_p$ denotes the semi‐inner product defined in \eqref{semi inner prod def}.  For further details, see \cite{kittaneh_jmaa_1996}.  Therefore, the $\|\cdot\|_p$ norm is {G}\^{a}teaux diferenciable at every non-zero $X\in \bp$ and
\[\lim_{t\to 0^+}\frac{\|X+tY\|_p-\|X\|_p}{t}=\frac{1}{\|X\|_p}\text{Re} [Y,X]_p, \text{  for all } Y\in\bp.\]

Following the ideas of Mili\v{c}i\'{c} in \cite{M1, M2}, we define the function $g: \bp\times\bp\to \R$ by  
\begin{equation}\label{g-function}
g(X,Y)=\|X\|_p\text{Re}(D_X(Y))=\text{Re}[Y,X]_p.
\end{equation}  
This function serves as a fundamental tool to analyze geometric properties of $\bp$. In particular, it allows us to define the notion of angle between operators in this space.

The first notion of angle is given by  
\begin{equation}\label{angulo 1}
\angle(X,Y) = \arccos\left( \frac{\text{Re}\left([Y,X]_p+[X,Y]_p \right) }{2\|X\|_p\|Y\|_p} \right)  
= \arccos\left(\frac{\beta_{Y,X}+\beta_{X,Y}}{2} \right).
\end{equation}  
This expression generalizes the classical angle concept in Hilbert spaces by utilizing the semi-inner product $[\cdot,\cdot]_p$ instead of the standard inner product.

An alternative weighted angle also defined by  Mili\v{c}i\'{c} is: 
\begin{eqnarray}\label{angulo 2}
\angle_g(X,Y) &=& \arccos\left( \frac{\|X\|_p^2\text{Re}[Y,X]_p+\|Y\|_p^2\text{Re}[X,Y]_p}{\|X\|_p\|Y\|_p\left(\|X\|_p^2+\|Y\|_p^2 \right) } \right)  \nonumber \\
&=& \arccos\left(\frac{\|X\|_p^2\beta_{Y,X}+\|Y\|_p^2\beta_{X,Y}}{\|X\|_p^2+\|Y\|_p^2} \right).
\end{eqnarray}  
This formulation assigns different weights to the contributions of $X$ and $Y$, reflecting their respective norms. The denominator ensures proper normalization, maintaining consistency with the classical cosine function.

Analogously, following the ideas of Nur and Gunawan in \cite{NG},  we define the $gg$-angle as 
\begin{equation}\label{angulo gg}
		\angle_{gg}(X,Y)=\arccos\left( \dfrac{\sqrt{\left| \text{Re}[Y,X]_p\right| \left| \text{Re}[X,Y]_p\right| }}{\|X\|_p\|Y\|_p } \right)=
	\arccos\left( \sqrt{\left| \beta_{Y,X}\right| \left| \beta_{X,Y}\right| }\right).
\end{equation}

Various definitions of angles in the space $\bp$ have been proposed, each possessing unique properties that are not necessarily comparable with one another. Motivated by the diverse approaches arising from semi-inner product structures, we introduce a new definition of an angle in $\bp$. This definition resembles the one initially proposed by Mili\v{c}i\'{c} in \eqref{angulo 1}, which was formulated for a real normed space. However, since in our context the space of $p$-Schatten ideals is a complex normed space, we extend and adapt this notion to capture essential relationships in a coherent manner.

\begin{definition}
	Let $p>1$ and \(X, Y \in \bp\) with $X\neq 0$ and $Y\neq 0$, we define the $p$- angle between $X$ and $Y$ as follows
\begin{equation}\label{angulo BC1}
\angle_{p}(X,Y) =\arccos\left(\frac{|\alpha_{Y,X}|+|\alpha_{X,Y}|}{2} \right). 	
\end{equation}
\end{definition}
Observe that this is a well-defined angle since
\[
0\leq \frac{|\alpha_{Y,X}|+|\alpha_{X,Y}|}{2}\leq 1,
\]
where the last inequality follows from Proposition \ref{properties of alpha}.

We note that for any nonzero operators $X,Y\in\mathcal{B}_p(\mathcal{H})$, the angle $\angle_p(X,Y)$ satisfies the following fundamental properties:

\begin{theorem}
	Let $p>1$ and \(X, Y \in \bp\) with $X\neq 0$ and $Y\neq 0$. Then, the $p$-angle satisfies the  following conditions
	\begin{enumerate}
	\item \(0\leq \angle_p(X,Y) \leq \frac{\pi}{2}\).
	\item Symmetry, i.e. \(\angle_p(X,Y)=\angle_p(Y,X)\).
		\item \(\angle_p(aX,bY)=\angle_p(X,Y)\) for any nonzero scalars \(a, b \in \mathbb{C}\).
	\item \(\angle_p(X,Y)=\frac{\pi}{2}\) if and only if $X\obj Y$ and $Y\obj X$.

	\item If \(X \parallel^p Y\) (or equivalently \(Y \parallel^p X\)), then \(\angle_p(X,Y)=0\); in particular, if \(X\) and \(Y\) are linearly dependent, then \(\angle_p(X,Y)=0\).
	\item \(\angle_p(X,Y)\leq \angle_{gg}(X,Y)\).
\end{enumerate}
\end{theorem}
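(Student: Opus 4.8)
The plan is to dispatch the six items one after another, each being a one- or two-line consequence of results already in place; throughout I would write $c:=\tfrac12\bigl(|\alpha_{Y,X}|+|\alpha_{X,Y}|\bigr)$, so that $\angle_p(X,Y)=\arccos c$.

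For (1) I would note that item~(4) of Proposition~\ref{properties of alpha} gives $|\alpha_{Y,X}|\le1$ and $|\alpha_{X,Y}|\le1$, and both quantities are manifestly $\ge0$, so $c\in[0,1]$ and hence $\arccos c\in[0,\pi/2]$. Item~(2) is immediate, since swapping the roles of $X$ and $Y$ merely interchanges the two summands defining $c$, so $c$ is symmetric and therefore $\angle_p$ is. For (3) I would invoke item~(1) of Proposition~\ref{properties of alpha}: one has $|\alpha_{bY,aX}|=\bigl|\tfrac{\overline a b}{|a||b|}\bigr|\,|\alpha_{Y,X}|=|\alpha_{Y,X}|$ and likewise $|\alpha_{aX,bY}|=|\alpha_{X,Y}|$, so the value of $c$ — and thus of $\angle_p$ — is unchanged under nonzero rescalings.

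For (4) I would argue that $\angle_p(X,Y)=\pi/2$ iff $c=0$, and since $c$ is an average of two nonnegative reals this happens iff $\alpha_{Y,X}=\alpha_{X,Y}=0$; by Corollary~\ref{coro equiv} this is exactly the condition $X\obj Y$ and $Y\obj X$. For (5), if $X\parallel^p Y$ then Theorem~\ref{|alpha|=1} gives $|\alpha_{Y,X}|=|\alpha_{X,Y}|=1$, so $c=1$ and $\angle_p(X,Y)=\arccos1=0$; and if $Y=aX$ with $a=|a|e^{i\theta}\neq0$, then with $\lambda=e^{-i\theta}\in\mathbb{T}$ one gets $\|X+\lambda Y\|_p=(1+|a|)\|X\|_p=\|X\|_p+\|Y\|_p$, i.e.\ $X\parallel^p Y$, so this reduces to the previous case.

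The only item requiring an actual (still elementary) estimate is (6). Since $\arccos$ is decreasing on $[0,1]$, it suffices to show
\[
\sqrt{|\beta_{Y,X}|\,|\beta_{X,Y}|}\;\le\;\frac{|\alpha_{Y,X}|+|\alpha_{X,Y}|}{2}.
\]
Here I would use $|\beta_{Y,X}|=|\operatorname{Re}(\alpha_{Y,X})|\le|\alpha_{Y,X}|$, and similarly $|\beta_{X,Y}|\le|\alpha_{X,Y}|$, together with the AM–GM inequality, to obtain
\[
\sqrt{|\beta_{Y,X}|\,|\beta_{X,Y}|}\;\le\;\sqrt{|\alpha_{Y,X}|\,|\alpha_{X,Y}|}\;\le\;\frac{|\alpha_{Y,X}|+|\alpha_{X,Y}|}{2},
\]
and then apply $\arccos$ to conclude $\angle_p(X,Y)\le\angle_{gg}(X,Y)$. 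The ``hard part'', if any, is purely bookkeeping: making sure every argument fed to $\arccos$ lies in $[0,1]$ (which follows from $|\alpha|\le1$ and $|\beta|\le|\alpha|$) and citing the correct previously established equivalence in items (4) and (5).
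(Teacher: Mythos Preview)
Your proof is correct and follows essentially the same approach as the paper: items (1)--(3) from Proposition~\ref{properties of alpha}, item (4) from Corollary~\ref{coro equiv}, item (5) from Theorem~\ref{|alpha|=1}, and item (6) via AM--GM together with $|\beta|\le|\alpha|$. The only cosmetic difference is that in (6) you apply $|\beta|\le|\alpha|$ before AM--GM, whereas the paper applies AM--GM first and then $|\beta|\le|\alpha|$; both orderings yield the same bound.
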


\begin{proof}
	
		Items (1), (2), and (3) follow directly from Proposition~\ref{properties of alpha}. Item (4) is a consequence of Corollary~\ref{coro equiv}, while item (5) follows from Theorem~\ref{|alpha|=1}. 
	
	We now present the proof of item (6). By applying the arithmetic-geometric mean inequality to $\sqrt{|\beta_{Y,X}|}$ and $\sqrt{|\beta_{X,Y}|}$, we obtain
	\[
	\sqrt{|\beta_{Y,X}| \cdot |\beta_{X,Y}|} \leq \frac{|\beta_{Y,X}| + |\beta_{X,Y}|}{2} \leq \frac{|\alpha_{Y,X}| + |\alpha_{X,Y}|}{2}.
	\]
	Since the function $\arccos$ is decreasing, it follows that
	\[
	\angle_p(X,Y) \leq \angle_{gg}(X,Y).
	\]
	
\end{proof}

These properties illustrate how \(\angle_p(X,Y)\) extends the concept of angles in \(\bp\) by capturing geometric relationships that are compatible with the structure imposed by the semi-inner product. In particular, the invariance under scalar multiplication and the comparison with the \(gg\)-angle emphasize its suitability for spectral and geometric analysis in \(\bp\).

\begin{theorem}
	Let $p>1$ and \(X, Y \in \bp\) with $X\neq 0$ and $Y\neq 0$, then 
	\begin{equation}
	\arccos\left(\sqrt{\frac{ \alpha_{|Y^*|,|X^*|}\alpha_{|Y|,|X|}+ \alpha_{|X^*|,|Y^*|}\alpha_{|X|,|Y|}}{2}}\right)\leq 	\angle_p(X,Y).
	\end{equation}
\end{theorem}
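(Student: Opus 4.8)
The inequality to establish is
\[
\arccos\!\left(\sqrt{\tfrac{\alpha_{|Y^*|,|X^*|}\alpha_{|Y|,|X|}+\alpha_{|X^*|,|Y^*|}\alpha_{|X|,|Y|}}{2}}\right)\leq \angle_p(X,Y)
=\arccos\!\left(\tfrac{|\alpha_{Y,X}|+|\alpha_{X,Y}|}{2}\right).
\]
Since $\arccos$ is decreasing on $[0,1]$, this is equivalent to the reverse inequality between the two arguments, namely
\[
\frac{|\alpha_{Y,X}|+|\alpha_{X,Y}|}{2}\;\leq\;\sqrt{\frac{\alpha_{|Y^*|,|X^*|}\alpha_{|Y|,|X|}+\alpha_{|X^*|,|Y^*|}\alpha_{|X|,|Y|}}{2}}.
\]
So the first step is to reduce to this algebraic statement, after noting both sides lie in $[0,1]$: the right-hand radicand is $\le 1$ because each of $\alpha_{|Y^*|,|X^*|}\alpha_{|Y|,|X|}$ and $\alpha_{|X^*|,|Y^*|}\alpha_{|X|,|Y|}$ is in $[0,1]$ (these are products of cosine-type quantities of \emph{positive} operators, hence nonnegative by Lemma~\ref{lem1}(4) and bounded by $1$ by Proposition~\ref{properties of alpha}(4)), and the left-hand side is in $[0,1]$ by Proposition~\ref{properties of alpha}(4).

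\textbf{Key steps.} The inequality \eqref{cota CS alpha} gives us, for the pair $(X,Y)$,
\[
|\alpha_{Y,X}|^2\leq \alpha_{|Y^*|,|X^*|}\,\alpha_{|Y|,|X|},
\]
and, applied to the pair $(Y,X)$ (i.e.\ swapping the roles),
\[
|\alpha_{X,Y}|^2\leq \alpha_{|X^*|,|Y^*|}\,\alpha_{|X|,|Y|}.
\]
Writing $a=|\alpha_{Y,X}|$, $b=|\alpha_{X,Y}|$, $A=\alpha_{|Y^*|,|X^*|}\alpha_{|Y|,|X|}$, $B=\alpha_{|X^*|,|Y^*|}\alpha_{|X|,|Y|}$, we have $a^2\le A$ and $b^2\le B$, so it suffices to prove the purely scalar fact that $a^2\le A$ and $b^2\le B$ imply $\frac{a+b}{2}\le\sqrt{\frac{A+B}{2}}$. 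This follows from the power-mean (or QM--AM) inequality: $\frac{a+b}{2}\le\sqrt{\frac{a^2+b^2}{2}}\le\sqrt{\frac{A+B}{2}}$, where the first inequality is the standard inequality between the arithmetic and quadratic means of the two nonnegative numbers $a,b$, and the second is monotonicity of $\sqrt{\cdot}$ together with $a^2+b^2\le A+B$. Chaining these and then applying $\arccos$ (which reverses the inequality) yields the claim.

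\textbf{Main obstacle.} There is no deep obstacle; the only points requiring care are bookkeeping ones. First, one must be sure all the $\alpha$-quantities involving moduli are genuinely in $[0,1]$ so that the square root and $\arccos$ are applied to legitimate arguments — this is where Lemma~\ref{lem1}(4) (nonnegativity) and Proposition~\ref{properties of alpha}(4) (the Cauchy--Schwarz bound) are invoked. Second, one must correctly identify that applying Theorem~\ref{cota CS alpha} with $X$ and $Y$ interchanged produces precisely $|\alpha_{X,Y}|^2\le\alpha_{|X^*|,|Y^*|}\alpha_{|X|,|Y|}$, matching the second summand in the radicand. With those two checks in place, the QM--AM step and the monotonicity of $\arccos$ finish the proof in a couple of lines.
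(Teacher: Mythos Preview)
Your proof is correct and follows essentially the same route as the paper: reduce via the monotonicity of $\arccos$ to the scalar inequality, invoke the Cauchy--Schwarz bound \eqref{cota CS alpha} twice (once for each ordering of $X,Y$), and finish with the QM--AM inequality. Your treatment is in fact slightly more careful than the paper's, since you explicitly check that the radicand lies in $[0,1]$ before applying $\arccos$.
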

\begin{proof}
	Recall that the arithmetic-quadratic mean inequality states that, for any real numbers $x_1$ and $x_2$, 
	\[
	\frac{x_1+x_2}{2}\leq \sqrt{\frac{x_1^2+x_2^2}{2}}.
	\]
	Substituting $x_1$ and $x_2$ with $|\alpha_{Y,X}|$ and $|\alpha_{X,Y}|$, respectively, and using \eqref{cota CS alpha}, we obtain
	\[
	\frac{|\alpha_{Y,X}|+|\alpha_{X,Y}|}{2}\leq \sqrt{\frac{|\alpha_{Y,X}|^2+|\alpha_{X,Y}|^2}{2}}\leq \sqrt{\frac{\alpha_{|Y^*|,|X^*|}\alpha_{|Y|,|X|}+\alpha_{|X^*|,|Y^*|}\alpha_{|X|,|Y|}}{2}}.
	\]
	Finally, applying the $\arccos$ function, it follows that 
	\[
	\arccos\left(\frac{|\alpha_{Y,X}|+|\alpha_{X,Y}|}{2}\right) \geq \arccos\left(\sqrt{\frac{\alpha_{|Y^*|,|X^*|}\alpha_{|Y|,|X|}+\alpha_{|X^*|,|Y^*|}\alpha_{|X|,|Y|}}{2}}\right).
	\]
\end{proof}

Returning once more to the definitions \eqref{angulo 2} and \eqref{angulo gg}, note that it is possible to define analogous angles in $\bp$ by using $|\alpha_{Y,X}|$ instead of $|\beta_{Y,X}|$, as it appears in their formulation, namely:

\begin{equation}\label{angulo abis}
		\angle_{p,w}(X,Y)=\arccos\left(\frac{\|X\|_p^2|\alpha_{Y,X}|+\|Y\|_p^2|\alpha_{X,Y}|}{\|X\|_p^2+\|X\|_p^2} \right),
\end{equation}
\begin{equation}\label{angulo aa}
		\angle_{gg, p}(X,Y)=\arccos\left( \sqrt{\left| \alpha_{Y,X}\right| \left| \alpha_{X,Y}\right| }\right).
\end{equation}

We conclude this article by presenting a generalization of the $p$-angle previously introduced. This generalization is based on the notion of a mean. More precisely, recall that a function  $M : \mathbb{R}^n \to \mathbb{R}$ is called a mean if it satisfies some or all of the following properties:

		\begin{enumerate}
			\item Symmetry:
			\[
			M(x_1, \dots, x_n) = M(x_{\sigma(1)}, \dots, x_{\sigma(n)})
			\]
			for any permutation $\sigma$ of $\{1, \dots, n\}$.
			
			\item Monotonicity:
			If $x_i \leq y_i$ for all $i = 1, \dots, n$, then
			\[
			M(x_1, \dots, x_n) \leq M(y_1, \dots, y_n).
			\]
			
			\item Boundedness:
			\[
			\min\{x_1, \dots, x_n\} \leq M(x_1, \dots, x_n) \leq \max\{x_1, \dots, x_n\}.
			\]
			
			\item Idempotence:
			\[
			M(x, x, \dots, x) = x \quad \text{for all } x \in \mathbb{R}.
			\]
			
			\item Continuity: $M$ is a continuous function.
		\end{enumerate}
		
		Let $x_1, \dots, x_n \in \mathbb{R}_{\geq 0}$. Common examples of a mean  include:
		
		\begin{itemize}
			\item Arithmetic Mean (AM):
			\[
			\text{AM}(x_1, \dots, x_n) = \frac{1}{n} \sum_{i=1}^n x_i
			\]
			
			\item Geometric Mean (GM):
			\[
			\text{GM}(x_1, \dots, x_n) = \left( \prod_{i=1}^n x_i \right)^{1/n}
			\]
			
			\item Harmonic Mean (HM):
			\[
			\text{HM}(x_1, \dots, x_n) = \left( \frac{1}{n} \sum_{i=1}^n \frac{1}{x_i} \right)^{-1}, \quad \text{if } x_i \neq 0
			\]
			
			\item Quadratic Mean or Root Mean Square (QM):
			\[
			\text{QM}(x_1, \dots, x_n) = \sqrt{ \frac{1}{n} \sum_{i=1}^n x_i^2 }
			\]
		\end{itemize}

		For non-negative real numbers $x_1, \dots, x_n$, the following chain of inequalities holds:
		\[
		\text{HM} \leq \text{GM} \leq \text{AM} \leq \text{QM},
		\]
		with equality if and only if $x_1 = x_2 = \cdots = x_n$.
		
	Observe that the geometric mean of $|\beta_{Y,X}|$ and $|\beta_{X,Y}|$ satisfies
	\[
	\text{GM}(|\beta_{Y,X}|, |\beta_{X,Y}|) = \sqrt{|\beta_{Y,X}| |\beta_{X,Y}|},
	\]
	and the arithmetic mean of  $|\alpha_{Y,X}|$ and $|\alpha_{X,Y}|$ is given by
	\[
	\text{AM}(|\alpha_{Y,X}|, |\alpha_{X,Y}|) = \frac{|\alpha_{Y,X}| + |\alpha_{X,Y}|}{2}.
	\]
This motivates the definition of a family of angles in the space $\mathcal{B}_p(\mathcal{H})$ associated with any two-variable mean $M$, namely
\begin{equation}\label{angulopM}
\angle_{p, M}(X, Y)
= \arccos\bigl(M\bigl(|\alpha_{Y,X}|,\;|\alpha_{X,Y}|\bigr)\bigr),
\end{equation}
for nonzero $X,Y\in\mathcal{B}_p(\mathcal{H})$.  

In particular, one may take $M$ to be the Heinz mean: for $a,b\ge0$ and $t\in[0,1]$,
\[
H_t(a,b)=a^t b^{1-t}+a^{1-t}b^t.
\]
Note that $H_0(a,b)=H_1(a,b)=\mathrm{AM}(a,b)$ and $H_{1/2}(a,b)=\mathrm{GM}(a,b)$, and in general
\[
\mathrm{GM}(a,b)\;\le\;H_t(a,b)\;\le\;\mathrm{AM}(a,b),
\]
for all $t\in[0,1]$ \cite{bhatia}.  Hence an appealing choice is the Heinz-mean angle
\begin{equation}\label{angulopH}
\angle_{p, H}(X, Y)
= \arccos\!\bigl(|\alpha_{Y,X}|^t\,|\alpha_{X,Y}|^{1-t}
+|\alpha_{Y,X}|^{1-t}\,|\alpha_{X,Y}|^t\bigr).
\end{equation}

\end{document}